\theoremstyle{definition}
 \newtheorem{definition}{Definition}[section]
\theoremstyle{plain}
\theoremstyle{plain}
 \newtheorem{theorem}[definition]{Theorem}
\theoremstyle{definition}
\theoremstyle{plain}
 \newtheorem{lemma}[definition]{Lemma}
\theoremstyle{plain}
\theoremstyle{remark}
 \newtheorem{remark}[definition]{Remark}
\theoremstyle{definition}
\theoremstyle{plain}
\newcommand{\Ext}{\mathrm{Ext}}
\newcommand{\End}{\mathrm{End}}
\newcommand{\Hom}{\mathrm{Hom}}
\newcommand{\Ca}{\mathcal{C}}
\newcommand{\Fun}{\mathrm{F}}
\newcommand{\Def}{\mathrm{Def}}
\newcommand{\Sets}{\mathrm{Sets}}
\newcommand{\Ob}{\mathrm{Ob}}
\newcommand{\A}{\Lambda}
\newcommand{\Ab}{\mathscr{A}}
\newcommand{\m}{\mathfrak{m}}
\renewcommand{\k}{\Bbbk}
\newcommand{\invlim}{\varprojlim}
\newcommand{\V}{\widehat{V}}
\newcommand{\Ar}{\widehat{\Lambda}}
\title[On universal deformation rings for objects of Ext-finite categories of modules]{On weak universal deformation rings for objects of Ext-finite categories of modules} 
\thanks{This research was partially supported by CODI (Universidad de Antioquia, UdeA), MINCIENCIAS (Convocatoria de Doctorado Nacional para profesores de IES 2021, Número 909), and the Office of Academic Affairs at the Valdosta State University.}
\author[L\'opez-Garc\'{\i}a]{Diego L\'opez-Garc\'{\i}a}
\address{Instituto de Matem\'aticas, Universidad de Antioquia, Medell{\'\i}n, Antioquia, Colombia}
\email{harvin.lopez@udea.edu.co}
\author[Rizzo]{Pedro Rizzo}
\address{Instituto de Matem\'aticas, Universidad de Antioquia, Medell{\'\i}n, Antioquia, Colombia}
\email{pedro.hernandez@udea.edu.co (Corresponding author)}
\author[V\'elez-Marulanda]{Jos\'e A. V\'elez-Marulanda}
\address{Department of Applied Mathematics \& Physics, Valdosta State University, Valdosta, GA,  United States of America}
\email{javelezmarulanda@valdosta.edu}
\address{Facultad de Matem\'aticas e Ingenier\'{\i}as, Fundaci\'on Universitaria Konrad Lorenz, Bogot\'a D.C.,  Colombia}
\email{josea.velezm@konradlorenz.edu.co}
\keywords{(Uni)versal deformation rings \and endomorphism rings }
\begin{document}
\renewcommand{\labelenumi}{\textup{(\roman{enumi})}}
\renewcommand{\labelenumii}{\textup{(\roman{enumi}.\alph{enumii})}}
\numberwithin{equation}{section}

\begin{abstract}
Let $\A$ be a $\k$-algebra where $\k$ a field of arbitrary characteristic, and let $\mathscr{A}_\k$ be a full subcategory of $\A$-Mod, the abelian category of left $\A$-modules. In particular, $\mathscr{A}_\k$ is a $\k$-category, i.e. the set of morphisms between objects in $\mathscr{A}_\k$ is a vector space over $\k$ and the composition of morphisms is $\k$-bilinear.  Following M. Kleiner and I. Reiten, $\mathscr{A}_\k$ is {\it Hom-finite} if the hom-space between any two objects in $\mathscr{A}_\k$  is finite-dimensional over $\k$. We further say that $\mathscr{A}_\k$ is {\it Ext-finite} if $\dim_\k\Ext^i_\A(X,Y)<\infty$ for all objects $X$ and $Y$ in $\mathscr{A}_\k$.  Let $V$ be an object in $\mathscr{A}_\k$. In this note we prove that if $\End_\A(V)$ is isomorphic to $\k$, then $V$ has a universal deformation ring $R(\A,V)$, which is a local complete Noetherian commutative $\k$-algebra whose residue field is also isomorphic to $\k$. We use this result to prove that if $\A$ is a local two-point infinite dimensional gentle $\k$-algebra (in the sense of V. Bekkert et al), then $R(\A,V)$ is isomorphic either to $\k$, to $\k[\![t]\!]/(t^2)$ or to $\k[\![t]\!]$. 
\end{abstract}
\subjclass[2010]{16G10 \and 16G20 \and 16G70}
\maketitle

\section{Introduction}\label{sec1}

Let $\k$ be a field of arbitrary characteristic. Following \cite[\S 1]{kleiner-reiten}, an additive category $\Ab$ is a {\it $\k$-category} if for all objects $X$ and $Y$ in $\Ab$, the set of morphisms $\Hom_\Ab(X,Y)$ is a $\k$-vector space and the composition of morphims is $\k$-bilinear. If $\Ab$ is a $\k$-category, then $\Ab$ is {\it Hom-finite} if $\dim_\k\Hom_\Ab(X,Y)<\infty$. If $\Ab$ is further hom-finite, then we say that $\Ab$ is {\it Ext-finite} provided that $\dim_\k\Ext_\Ab^1(X,Y)<\infty$ for all objects $X$ and $Y$ in $\Ab$. Examples of Hom-finite and Ext-finite $\k$-categories is the abelian category of modules of finite-length over {\it admissible} algebras (as in \cite[\S 6]{krause4}). For a systematic study of a particular class of Hom-finite and Ext-finite $\k$-categories and their connection to specific classification problems, see \cite[pp. 857-859]{eno2018}. On the other hand, we denote by $\widehat{\Ca}$ the category of all complete local commutative Noetherian $\k$-algebras with residue field $\k$. In particular, the morphisms in $\widehat{\Ca}$ are continuous $\k$-algebra homomorphisms that induce the identity map on $\k$. 

Let $\A$ be a (not necessarily finite dimensional) $\k$-algebra, and let $\Ab_\k$ be a Hom-finite and Ext-finite full subcategory of $\A$-Mod, the abelian category of left $\A$-modules. Let $R$ be a fixed object in $\widehat{\Ca}$. We denote by $R\A$ the tensor product of $\k$-algebras $R\otimes_\k\A$. Let $V$ be a left $\A$-module in $\Ab_\k$. We say that $(M, \phi)$ is a {\it lift} of $V$ over $R$ if $M$ is  a finitely generated left $R\A$-module $M$ that  is free over $R$ and $\phi: \k\otimes_RM\to V$ is an isomorphism of left $\A$-modules. Two lifts $(M,\phi)$ and $(M', \phi')$ of $V$ over $R$ are said to be {\it isomorphic} if there exists an isomorphism of left $R\A$-modules $f: M\to M'$ such that $\phi = \phi'\circ (\mathrm{id}_\k\otimes f)$. A {\it deformation} of $V$ over $R$ is an isomorphism class of lifts of $V$ over $R$. We denote by $\Def_\A(V,R)$ the set of all deformations of $V$ over $R$. The {\it deformation functor} of $V$ is the covariant functor $\widehat{\Fun}_V: \widehat{\Ca}\to \Sets$ that sends $R\in \Ob(\widehat{\Ca})$ to $\Def_\A(V,R)$, and for each morphism $\theta: R\to R'$ in $\widehat{\Ca}$, $\widehat{\Fun}_V(\theta) : \Def_\A(V,R) \to \Def_\A(V, R')$ is the morphism that sends $[M, \phi]$ to $[R'\otimes_{R,\theta}M, \phi_\theta]$, where $\phi_\theta$ is the composition of the natural isomorphism $\k\otimes_{R'}(R'\otimes_{R,\theta}M$ with $\phi$.  The first main goal of this article is to prove the following result, which extends \cite[Prop. 2.1]{blehervelez} to Ext-finite categories of finite dimensional modules over associative $\k$-algebras. 

\begin{theorem}\label{thm1}
Let $\A$ be a (not necessarily finite dimensional) $\k$-algebra, and $\Ab_\k$ be a Hom-finite and Ext-finite full subcategory of $\A\textup{-Mod}$, and let $V$ be an object in $\Ab_\k$. If $\End_\A(V)=\k$, then $\widehat{\Fun}_V$ is represented by an object $R(\A,V)$ in $\widehat{\Ca}$.
\end{theorem}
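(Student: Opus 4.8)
The plan is to verify the hypotheses of Schlessinger's representability criterion for the deformation functor $\widehat{\Fun}_V$, adapting the strategy of \cite[Prop. 2.1]{blehervelez} to the present Ext-finite setting. Recall that Schlessinger's theorem guarantees that a functor $\widehat{\Fun}_V \colon \widehat{\Ca} \to \Sets$ with $\widehat{\Fun}_V(\k)$ a single point is representable by an object of $\widehat{\Ca}$ provided that the natural map
\[
\widehat{\Fun}_V(R'' \times_R R') \longrightarrow \widehat{\Fun}_V(R'') \times_{\widehat{\Fun}_V(R)} \widehat{\Fun}_V(R')
\]
is always surjective and is bijective whenever $R = \k$ and $R' = \k[t]/(t^2)$ (conditions (H1)--(H3)), and in addition that the tangent space $t_{\widehat{\Fun}_V} = \widehat{\Fun}_V(\k[t]/(t^2))$ is finite-dimensional over $\k$ (condition (H4)). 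Since the latter is precisely what Ext-finiteness is designed to supply, the overall structure of the argument is clear; the work lies in producing lifts and checking their behavior under fiber products.

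First I would establish the tangent space computation, identifying $t_{\widehat{\Fun}_V}$ with $\Ext^1_\A(V,V)$. The standard dictionary sends a deformation of $V$ over $\k[t]/(t^2)$ to the isomorphism class of the corresponding extension of $V$ by $V$; one checks this is a bijection of sets compatible with the $\k$-vector space structures. Because $\Ab_\k$ is Ext-finite, $\dim_\k \Ext^1_\A(V,V) < \infty$, which gives (H4) immediately. This is the step where the hypothesis on the category is used essentially, and it replaces the finite-dimensionality-of-$\A$ assumption present in \cite{blehervelez}.

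Next I would verify (H1)--(H3) by analysing lifts along the small surjections $R'' \surjection R$ in $\widehat{\Ca}$. The key technical fact is that a finitely generated $R\A$-module that is free over $R$ can be reconstructed from tensor-product data over a fiber product $R'' \times_R R'$: given lifts $(M'', \phi'')$ and $(M', \phi')$ agreeing over $R$, one forms the pullback module $M'' \times_{M} M'$ over $R'' \times_R R'$ and checks that it is again free over $R''\times_R R'$ (here one uses that the fiber product of free modules over a fiber product of rings is free, together with flatness arguments), that it reduces correctly modulo the maximal ideal, and that it is finitely generated. This yields surjectivity in (H1), and in the situation of (H2), where $R' = \k[t]/(t^2)$ and the map $R''\surjection R$ is small, the endomorphism hypothesis $\End_\A(V) = \k$ forces the automorphisms of any lift to reduce to scalars, which is exactly what is needed to promote surjectivity to bijectivity. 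I would carry out the verification that isomorphisms of lifts glue compatibly, using that $\End_\A(V) = \k$ to control the ambiguity in the choice of isomorphism $\phi$.

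The main obstacle I anticipate is the reconstruction/gluing step for the fiber-product condition (H1) in this level of generality: one must confirm that the module-theoretic pullback stays inside the category of finitely generated $R\A$-modules free over $R$, and that the two projection isomorphisms are genuinely compatible, without the convenience of $\A$ being finite-dimensional. The freeness over the base is the crux, since it is freeness that makes the deformation functor well-behaved, and verifying it for the fiber product requires care with the exactness of the relevant sequences. The condition $\End_\A(V) = \k$ is what rescues the bijectivity assertion in (H2), because it collapses the group of lift-automorphisms to $\k^\times$ acting trivially on deformation classes; I would make this precise by showing that any automorphism of a lift inducing the identity on $V$ differs from the identity by a nilpotent endomorphism that, by the endomorphism hypothesis, must be scalar and hence trivial modulo the relevant ideal.
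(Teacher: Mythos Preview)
Your overall strategy---verify Schlessinger's criteria, identify the tangent space with $\Ext^1_\A(V,V)$, and use the Ext-finiteness hypothesis to get finite dimensionality---is exactly the paper's approach. The fibre-product construction of lifts you describe is what the paper does for (H$_1$), and your observation that $\End_\A(V)=\k$ forces endomorphisms of a lift to be scalar is the content of the paper's key lemma (the map $R\to \End_{R\A}(M)$ is an isomorphism for any lift $M$).

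Two genuine gaps, however. First, your account of Schlessinger's conditions is garbled: you label finite-dimensionality of the tangent space as (H$_4$), whereas in Schlessinger's numbering that is (H$_3$); more importantly, the conditions you list only produce a \emph{hull} (a weak universal deformation ring), not a representing object. Representability requires the extra condition (H$_4$): bijectivity of $\Theta$ when $R'=R''$ and $R'\to R$ is a small extension. You do not state this, and you misattribute the role of $\End_\A(V)=\k$---it is not needed for (H$_2$) (there $R=\k$, and injectivity holds unconditionally), but precisely for (H$_4$). The paper handles (H$_2$) and (H$_4$) together by proving injectivity of $\Theta$ whenever either $R=\k$ or $\End_\A(V)=\k$.

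Second, and more substantively, you omit continuity entirely. Schlessinger's theorem is a statement about the restriction $\Fun_V$ to \emph{Artinian} objects in $\widehat{\Ca}$; it yields pro-representability of $\Fun_V$. To conclude that $\widehat{\Fun}_V$ itself is representable on all of $\widehat{\Ca}$ you must also show that $\widehat{\Fun}_V$ is continuous, i.e.\ that the natural map $\widehat{\Fun}_V(R)\to \varprojlim_i \Fun_V(R/\m_R^i)$ is a bijection for every $R$. The paper devotes a separate lemma to this, constructing compatible systems of lifts and gluing isomorphisms at each finite level into an isomorphism over $R$; this step is not automatic and should appear in your plan.
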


The object $R(\A,V)$ in Theorem \ref{thm1} is unique up to canonical isomorphism and is called the {\it universal deformation ring} of $V$.

Recall that $\A$ is said to be a {\it two-point} algebra if $\A/\mathrm{rad}\,\A\cong \k^2$. Let $\k Q/I$ be a basic two-point $\k$-algebra and let $\A$ be its completion. Assume that $\A$ has infinite dimension over $\k$ and that it is also gentle (in the sense of \cite[Def. 2.5]{bekkert-drozd-futorny}). Then it follows from \cite[Prop. 2.6 (2)]{bekkert-drozd-futorny} that $\A$ is one of the algebras from Table \ref{table1}. Our second result is the following, which extends \cite[Thm. 5.2]{bekkert-giraldo-velez} for infinite dimensional gentle algebras. 

\begin{theorem}\label{thm2}
Let $\A$ be one of the algebras in Table \ref{table1}. If $V$ is a left $\A$-module such that $\End_\A(V)\cong \k$, then $R(\A, V)$ is isomorphic either to $\k$, to $\k[\![t]\!]/(t^2)$, or to $\k[\![t]\!]$.  
\end{theorem}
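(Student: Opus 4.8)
The plan is to combine Theorem~\ref{thm1} with the combinatorics of string modules over the algebras of Table~\ref{table1}. Since $\End_\A(V)\cong\k$, the module $V$ is a brick and hence, by the classification of indecomposables over infinite-dimensional gentle algebras in \cite{bekkert-drozd-futorny}, a finite-dimensional string module $M(w)$ (or a band module) attached to a reduced walk $w$ in the quiver of $\A$. By Theorem~\ref{thm1} the universal deformation ring $R(\A,V)$ exists in $\widehat{\Ca}$, and the standard identification of the cotangent space of a representing ring with the dual tangent space of the functor gives $\m_{R(\A,V)}/\m_{R(\A,V)}^2\cong\Ext^1_\A(V,V)^{*}$. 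Hence $R(\A,V)$ is a quotient of $\k[\![t_1,\dots,t_d]\!]$ by an ideal contained in $(t_1,\dots,t_d)^2$, where $d=\dim_\k\Ext^1_\A(V,V)$.

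The first step is to show $d\le 1$ for every brick $V$. Running through the finitely many algebras of Table~\ref{table1}, I would enumerate the reduced walks $w$ for which $M(w)$ is a brick and compute $\Ext^1_\A\bigl(M(w),M(w)\bigr)$ from the standard basis of arrow extensions and overlap extensions of $w$. The two-point hypothesis together with the gentle relations sharply limits the admissible self-overlaps, and a direct case analysis yields $\dim_\k\Ext^1_\A(V,V)\in\{0,1\}$. When $d=0$ the module is rigid, $\widehat{\Fun}_V\bigl(\k[\![t]\!]/(t^2)\bigr)$ is a single point, and $R(\A,V)\cong\k$.

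The remaining and principal case is $d=1$, for which $R(\A,V)\cong\k[\![t]\!]/J$ with $J\subseteq(t^2)$, so it suffices to decide to which order the unique (up to scalar) first-order deformation lifts. I would make this explicit by exhibiting, for each such brick, an $R\A$-module structure on the free $R$-module $R\otimes_\k V$ in which a single distinguished arrow $\alpha$ of $w$ acts by a matrix whose linear term realizes the generator of $\Ext^1_\A(V,V)$, all other arrows acting as in $V$. The obstruction to raising the order of such a lift is then governed by the defining relations of $\A$. If the deformation direction lies along an arrow subject to a relation $\alpha^2=0$, this relation forces $t^2=0$ on any lift, so the deformation does not extend past order two and $R(\A,V)\cong\k[\![t]\!]/(t^2)$. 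If instead it lies along an arrow that is \emph{free} in $\A$ --- the arrow responsible for the infinite-dimensionality, constrained by no length-two relation --- then letting $\alpha$ act as multiplication by $t$ produces a lift of $V$ over $\k[\![t]\!]$ whose classifying map $R(\A,V)\to\k[\![t]\!]$ is surjective (being an isomorphism on cotangent spaces); as $R(\A,V)$ is already a quotient of $\k[\![t]\!]$, this forces $R(\A,V)\cong\k[\![t]\!]$.

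The hard part will be the bookkeeping in the case $d=1$: matching each brick to its unique deformation direction and checking that the displayed module is genuinely a lift, free over $R$ and reducing to $V$ modulo $\m_{R}$. The trichotomy in the statement ultimately reflects the fact that $\A$ is \emph{gentle}, so that its relations have length two: along any loop $\alpha$ one has either $\alpha^2=0$ or $\alpha^n\neq 0$ for all $n$, with no intermediate nilpotency. This is exactly what forbids an obstruction of order $n\ge 3$, and hence any ring $\k[\![t]\!]/(t^n)$ with $n\ge 3$. Once this dichotomy is in place, assembling the three cases over all bricks and all algebras of Table~\ref{table1} yields the theorem.
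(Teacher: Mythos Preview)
Your overall strategy is sound and reaches the same trichotomy, but the route you take in the $d=1$ case differs from the paper's. The paper does not argue via explicit lifts and obstruction calculus. Instead, after listing the bricks for a representative algebra (Lemma~\ref{lemma3.2}) and observing $d\le 1$ (Remark~\ref{rem3.3}), it invokes Theorem~\ref{thm8} from \cite{rizzo-velez}: for each brick $V$ with $d=1$ it exhibits a maximal chain $\mathscr{L}_V=\{V_0,V_1,\ldots\}$ of indecomposables connected by the required surjections and injections, and reads off $R(\A,V)\cong\k[\![t]\!]/(t^{N+1})$ or $\k[\![t]\!]$ according to whether $\mathscr{L}_V$ terminates at $V_N$ or is infinite. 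In the worked example this chain is $\{S_1,M[a]\}$ for the simple module (giving $\k[\![t]\!]/(t^2)$) and $\{M[bca],M[bca(dbca)],M[bca(dbca)^2],\ldots\}$ for $M[bca]$ (giving $\k[\![t]\!]$). Your approach is more elementary and self-contained---it avoids the black box of Theorem~\ref{thm8}---while the paper's is more structural and makes the connection to the string combinatorics (the ``connecting arrow'' and the tower of string modules it generates) completely transparent.

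Two points to tighten. First, you mention band modules in passing but do not dispose of them; the paper's Lemma~\ref{lemma3.2} (via Krause's description of morphisms) asserts that for these algebras every module with trivial endomorphism ring is simple or a string module from a short explicit list, so you should either verify that no band brick occurs or restrict attention to string modules from the outset. Second, your obstruction argument in the $\k[\![t]\!]/(t^2)$ case (``the relation $\alpha^2=0$ forces $t^2=0$ on any lift'') is correct for the simple modules---where a lift over $R$ is a rank-one free $R$-module, $\alpha$ acts by an element $x\in\m_R$ with $x\equiv\lambda t\pmod{\m_R^2}$, $\lambda\neq 0$, and $x^2=0$ forces $t^2=0$ in $R$---but you should state explicitly that this exhausts the obstructed case, since for the higher-dimensional string bricks in the list the unique self-extension is always realised by a connecting arrow that is \emph{not} squared to zero, which is exactly why those bricks land in the $\k[\![t]\!]$ column.
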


This work is partially motivated by \cite{fgrv}, where the authors develop a deformation theory for finite-dimensional modules over the repetitive algebra and explore its geometric applications. The repetitive algebra, which is infinite-dimensional, plays a crucial role in the classification of representation type for self-injective algebras (see, e.g., \cite{as3} and references therein). Furthermore, by Happel's celebrated result in \cite{happel}, it is fundamental in studying the derived category of bounded complexes of finitely generated modules over finite-dimensional algebras. In particular, the category of finite-dimensional modules over the repetitive algebra exemplifies a hom-finite and ext-finite category.

This article constitutes the doctoral dissertation of the first author under the supervision of the other two authors.  

\begin{table}
\centering
\begin{tabular}{|c|c|}\hline
\begin{tabular}{c}
$
Q' =\xymatrix@1@=20pt{
\underset{1}{\bullet}\ar@/^/[rr]^a&&\underset{2}{\bullet}\ar@/^/[ll]^b
}$
\\
$I = 0$
\end{tabular}
&
\begin{tabular}{c}
$
Q'' =\xymatrix@1@=20pt{
\underset{1}{\bullet}\ar@/^/[rr]^b\ar@/^2pc/[rr]^c&&\underset{2}{\bullet}\ar@/^/[ll]^c
}$
\\
$I = \langle ac,ca\rangle$
\end{tabular}\\\hline
\begin{tabular}{c}
$
Q''' =\xymatrix@1@=20pt{
\underset{1}{\bullet}\ar@/^/[rr]^b\ar@/^2pc/[rr]^a&&\underset{2}{\bullet}\ar@/^/[ll]^c\ar@/^2pc/[ll]^d
}$
\\
$I = \langle ca, db, ac, bd\rangle$\\
$I = \langle ca, db, bc, ad\rangle$
\end{tabular}
&
\begin{tabular}{c}
$
Q^{(iv)} =\xymatrix@1@=20pt{
\underset{1}{\bullet}\ar@(ul,dl)_a\ar[rr]^b&&\underset{2}{\bullet}
}$
\\\\
$I = \langle ba \rangle$
\end{tabular}\\\hline
\begin{tabular}{c}
$
Q^{(v)} =\xymatrix@1@=20pt{
\underset{1}{\bullet}\ar[rr]^b&&\underset{2}{\bullet}\ar@(ur,dr)^a
}$
\\
$I = \langle ab \rangle$
\end{tabular}
&
\begin{tabular}{c}
$
Q^{(vi)} =\xymatrix@1@=20pt{
\underset{1}{\bullet}\ar@(ul,dl)_a\ar@/^/[rr]^b&&\underset{2}{\bullet}\ar@/^/[ll]^c
}$
\\
$I = \langle a^2, bc \rangle$\\
$I = \langle ba, ac \rangle$\\
$I = \langle ba, ac, cb \rangle$
\end{tabular}\\\hline
\begin{tabular}{c}
$
Q^{(vii)} =\xymatrix@1@=20pt{
\underset{1}{\bullet}\ar@(ul,dl)_a\ar[rr]^c&&\underset{2}{\bullet}\ar@(ur,dr)^b
}$
\\
$I = \langle a^2, bc \rangle$\\
$I = \langle ca, b^2 \rangle$\\
$I = \langle ca, bc \rangle$
\end{tabular}
&
\begin{tabular}{c}
$
Q^{(viii)} =\xymatrix@1@=20pt{
\underset{1}{\bullet}\ar@(ul,dl)_a\ar@/^/[rr]^c&&\underset{2}{\bullet}\ar@/^/[ll]^d\ar@(ur,dr)^b
}$
\\
$I = \langle a^2, b^2, dc, cd \rangle$\\
$I = \langle a^2, db, bc, cd \rangle$\\
$I = \langle ca, db, bc, ad \rangle$
\end{tabular}\\\hline
\end{tabular}
\caption{Two-point infinite dimensional gentle algebras}\label{table1}
\end{table}
	
\section{Proof of Theorem \ref{thm1}}\label{sec2}

Through this section we assume the assumptions in the hypothesis of Theorem \ref{thm1}.   We denote by $\Fun_V$ the restriction of $\widehat{\Fun}_V$ to the full subcategory of Artinian objects in $\widehat{\Ca}$. Let $\k[\epsilon]$, with $\epsilon^2=0$, denote the ring of dual numbers over $\k$. The {\it tangent space} of $\widehat{\Fun}_V$ is defined to be the set $t_{V}=\Fun_V(\k[\epsilon])$. By \cite[Lemma 2.10]{sch}, $t_{V}$ has an structure of $\k$-vector space. We need to recall the following definition from \cite[Def. 1.2]{sch}. Following \cite[\S 14]{mazur}, we say that a functor $H: \widehat{\Ca}\to \Sets$ is {\it continuous} provided that for all objects $R$ in $\widehat{\Ca}$, 
\begin{equation}\label{cont}
H(R)=\invlim_iH(R/\mathfrak{m}_R^i),
\end{equation}
where $\mathfrak{m}_R$ denotes the unique maximal ideal of $R$. In order to prove Theorem \ref{thm1}, we need to prove the following more detailed version of it. 

\begin{theorem}\label{thm3}
The functor $\Fun_V$ has a pro-representable hull $R(\A,V)$ in $\widehat{\Ca}$ (as in \cite[Def. 2.7]{sch}), and the functor $\widehat{\Fun}_V$ is continuos. Moreover, there is a $\k$-vector space isomorphism $t_V\cong \Ext^1_\A(V,V)$. If $\End_\A(V)=\k$, then $\widehat{\Fun}_V$ is represented by $R(\A,V)$.
\end{theorem}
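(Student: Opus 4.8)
The plan is to verify the hypotheses of Schlessinger's criterion (Theorem 2.11 in \cite{sch}) for the functor $\Fun_V$, which gives the pro-representable hull, and then to promote pro-representability to genuine representability by computing the automorphism obstruction. The setting is the classical deformation-theoretic machinery, adapted from the module case treated in \cite{blehervelez} to an arbitrary Hom-finite and Ext-finite subcategory $\Ab_\k$, so the overall architecture is standard; the work lies in checking that the Ext-finiteness hypothesis supplies exactly the finiteness needed at each point where \cite{blehervelez} invoked finite length.

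First I would verify Schlessinger's conditions $(H_1)$ through $(H_4)$. Given a pair of morphisms $R' \to R \leftarrow R''$ in the Artinian subcategory with $R'' \to R$ a small surjection, one considers the natural map
\begin{equation}\label{schmap}
\Fun_V(R'\times_R R'') \longrightarrow \Fun_V(R')\times_{\Fun_V(R)}\Fun_V(R'').
\end{equation}
Conditions $(H_1)$ and $(H_2)$ assert that this map is surjective (respectively bijective when $R=\k$ and $R''=\k[\epsilon]$); the proof is the usual gluing of lifts along a fiber product, using that a lift that is free over the base pulls back compatibly, and that two lifts agreeing over $R$ can be glued over $R'\times_R R''$ because the obstruction to gluing is the difference of two isomorphisms, which lives in a $\Hom$-group. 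The freeness of lifts over the coefficient ring and the compatibility of tensor products are the technical inputs here. Condition $(H_4)$ (bijectivity of \eqref{schmap} when $R'=R''$ and the map is a fixed small extension) again reduces to the vanishing of an appropriate difference obstruction.

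The decisive condition is $(H_3)$: the tangent space $t_V = \Fun_V(\k[\epsilon])$ must be finite-dimensional over $\k$. Here is where I would prove the asserted isomorphism $t_V \cong \Ext^1_\A(V,V)$. A lift of $V$ over $\k[\epsilon]$ is an extension of $\A$-modules of $V$ by $V$ that is free over $\k[\epsilon]$; unwinding the $\k[\epsilon]$-module structure identifies isomorphism classes of such lifts with short exact sequences $0\to V\to E\to V\to 0$ up to equivalence, and this is precisely $\Ext^1_\A(V,V)$, with the vector-space structures matching by \cite[Lemma 2.10]{sch}. The Ext-finiteness hypothesis on $\Ab_\k$ then gives $\dim_\k t_V = \dim_\k\Ext^1_\A(V,V)<\infty$, which is $(H_3)$. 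This is the one step that genuinely uses the Ext-finite hypothesis rather than mere Hom-finiteness, and I expect it to be the main obstacle, chiefly in checking that the correspondence between lifts over $\k[\epsilon]$ and extension classes is well defined at the level of the abstract category $\Ab_\k$ (that every such extension $E$ lands back in the ambient $\A$-module category and that the resulting module is genuinely free over $\k[\epsilon]$). With $(H_1)$--$(H_4)$ in hand, Schlessinger yields a pro-representable hull $R(\A,V)$.

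It remains to address continuity of $\widehat{\Fun}_V$ and to upgrade from a hull to a representing object under the hypothesis $\End_\A(V)=\k$. For continuity, I would verify \eqref{cont} directly: a lift over $R$ is finitely generated and free over $R$, hence determined by its reductions modulo the powers $\m_R^i$, giving $\widehat{\Fun}_V(R)\cong\invlim_i\widehat{\Fun}_V(R/\m_R^i)$; the inverse limit is taken over Artinian quotients, so the hull computed for $\Fun_V$ governs all of $\widehat{\Fun}_V$. Finally, Schlessinger's theorem tells us that the hull represents the functor precisely when the map \eqref{schmap} is bijective for every small extension, and the obstruction to this bijectivity (beyond $(H_1)$--$(H_2)$) is measured by the action of the automorphisms of the lift, equivalently by $\End_\A(V)$ modulo scalars. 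Since $\End_\A(V)=\k$, every $\A$-automorphism of $V$ is a scalar, so there are no nontrivial automorphisms to obstruct the gluing, the relevant map becomes bijective, and $R(\A,V)$ represents $\widehat{\Fun}_V$. This is the standard rigidity argument, and the hypothesis $\End_\A(V)=\k$ is exactly what makes it go through.
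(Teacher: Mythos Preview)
Your proposal is correct and follows the same route as the paper: verify Schlessinger's $(H_1)$--$(H_4)$ via fiber-product gluing and the identification $t_V\cong\Ext^1_\A(V,V)$ (which is where Ext-finiteness enters), prove continuity of $\widehat{\Fun}_V$ directly, and use $\End_\A(V)=\k$ to secure $(H_4)$. The one technical step the paper isolates that you leave implicit is the lemma that $\End_\A(V)=\k$ forces the scalar map $R\to\End_{R\A}(M)$ to be an isomorphism for every lift $M$ over Artinian $R$ (proved by climbing small extensions); it is this, rather than the bare hypothesis on $\End_\A(V)$, that lets one adjust the gluing isomorphisms by units of $R$ and obtain injectivity for $(H_2)$ and $(H_4)$.
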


In order to prove that $\Fun_V$ has a pro-representable hull $R(\A,V)$, we need to check that $\Fun_V$ satisfies Schlessinger's criteria (H$_1$)-(H$_3$). In order to prove that $\Fun_V$ is representable, we need to check that  if $\End_\A(V)=\k$, then $\Fun_V$ also further satisfies (H$_4$) (see \cite[Thm. 2.11]{sch}).  More precisely, for all pullback diagrams of Artinian objects in $\widehat{\Ca}$, 

\begin{equation}\label{pullbackart}
\xymatrix@=20pt{
&R'''\ar[dl]_{\pi'}\ar[dr]^{\pi''}&\\
R'\ar[dr]_{\theta'}&&R''\ar[dl]^{\theta''}\\
&R&
}
\end{equation}
the induced map
\begin{equation}\label{thetapullback}
\Theta: \Fun_V(R''')\to \Fun_V(R') \times_{\Fun_V(R)}\Fun_V(R'')
\end{equation}
satisfies the following properties:
\begin{enumerate}
\item[(H$_1$)] $\Theta$ is surjective whenever $\theta'':R''\to R$ is a {\it small extension} in the sense of \cite[Def.1.2]{sch}, i.e. the kernel of $\theta''$ is a non-zero principal ideal $tR''$ that is annihilated by the unique maximal ideal $\m_R$ of $R$. ;
\item[(H$_2$)] $\Theta$ is a bijective when $R=\k$ and $R''=\k[\epsilon]$, where $\k[\epsilon]$ denotes the ring of dual numbers over $\k$ with $\epsilon^2=0$;
\item[(H$_3$)] the tangent space $t_{V}$ is finite dimensional over $\k$.
\end{enumerate}
Furthermore, if $\End_{\A}(V)=\k$, then the following property is also satisfied: 
\begin{enumerate}
\item[(H$_4$)] $\Theta$ is a bijection when $R'=R''$ and $\theta':R'\to R$ is a small extension. 
\end{enumerate}

\begin{lemma}\label{lemma3.12}
Schlessinger's criterion \textup{(H$_1$)} is satisfied by $\Fun_{V}$.
\end{lemma}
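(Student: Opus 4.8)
The plan is to verify the surjectivity of $\Theta$ by an explicit fibre-product construction of lifts. Fix a small extension $\theta'': R'' \to R$ and an arbitrary morphism $\theta': R' \to R$ of Artinian objects, and suppose we are given a pair $([M',\phi'],[M'',\phi''])$ in $\Fun_V(R') \times_{\Fun_V(R)} \Fun_V(R'')$. Since the two deformations have the same image in $\Fun_V(R)$, after fixing representatives we obtain a lift $M_R$ of $V$ over $R$ together with $\A$-linear isomorphisms $R\otimes_{R',\theta'} M' \xrightarrow{\sim} M_R$ and $R \otimes_{R'',\theta''} M'' \xrightarrow{\sim} M_R$ compatible with $\phi'$ and $\phi''$. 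Writing $q': M' \to M_R$ and $q'': M'' \to M_R$ for the resulting reduction maps, I set
\[
M''' := M' \times_{M_R} M'' = \{(m',m'') \in M'\times M'' : q'(m') = q''(m'')\},
\]
which is a left $R'''\A$-module under the diagonal action of $R''' = R'\times_R R''$; this is well defined precisely because elements of $R'''$ have matching images in $R$, so the action preserves the defining condition.

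The first and main task is to prove that $M'''$ is finitely generated and free over $R'''$ of rank $n = \dim_\k V$, and this is the only place where the hypothesis on $\theta''$ enters. Choose a basis $e_1',\dots,e_n'$ of $M'$ over $R'$; its image $\bar e_i := q'(e_i')$ is a basis of $M_R$ over $R$, since base change of a free module is free regardless of any surjectivity. Because $\theta''$ is surjective and $M''$ is free, the reduction $q'': M'' \to M_R$ is surjective, so I may choose $e_i'' \in M''$ with $q''(e_i'') = \bar e_i$, and by Nakayama together with a rank count these $e_i''$ form a basis of $M''$ over $R''$. Setting $E_i := (e_i',e_i'') \in M'''$, I will check that $\{E_i\}$ is an $R'''$-basis: linear independence is immediate from independence of the $e_i'$ and the $e_i''$ in the two factors, while for spanning one writes a given $(m',m'')$ as $m' = \sum a_i' e_i'$ and $m'' = \sum b_i'' e_i''$, and uses $q'(m') = q''(m'')$ together with the fact that the $\bar e_i$ form a basis to deduce $\theta'(a_i') = \theta''(b_i'')$ for every $i$; hence $(a_i',b_i'') \in R'''$ and $\sum_i (a_i',b_i'')\,E_i = (m',m'')$.

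With freeness in hand, it remains to realize $M'''$ as a genuine lift and to verify that it maps to the prescribed pair. Reducing modulo $\m_{R'''}$, the projection $M''' \to M'$ (which is $R'''$-linear via $\pi': R''' \to R'$ and $\A$-linear) induces an $\A$-linear isomorphism $\k \otimes_{R'''} M''' \xrightarrow{\sim} \k \otimes_{R'} M'$, because $\pi'$ is surjective and both sides are spanned by the residues of the $E_i$, respectively the $e_i'$, with the same residual $\A$-action; composing with $\phi'$ yields $\phi''': \k \otimes_{R'''} M''' \to V$, so $(M''',\phi''')$ is a lift of $V$ over $R'''$. Finally, since $M'''$ is free on the $E_i$, the canonical map $R' \otimes_{R'''} M''' \to M'$ carries the basis $1 \otimes E_i$ to the basis $e_i'$ and is therefore an isomorphism of lifts over $R'$, and symmetrically over $R''$; thus $\Theta([M''',\phi''']) = ([M',\phi'],[M'',\phi''])$, giving surjectivity. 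I expect the freeness of the fibre-product module to be the only real obstacle, everything else being the bookkeeping of reductions, and it is exactly there that surjectivity of the small extension $\theta''$ is indispensable: without it one could not match a basis of $M'$ with a basis of $M''$ lying over the same basis of $M_R$.
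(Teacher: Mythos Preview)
Your proof is correct and follows essentially the same fibre-product construction as the paper's own argument. The only difference is cosmetic: where the paper invokes \cite[Lemma~3.4]{sch} to conclude that $M'''$ is free over $R'''$, you unpack that lemma by explicitly matching bases of $M'$ and $M''$ over a common basis of $M_R$, which is precisely the content of Schlessinger's result.
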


\begin{proof}
Assume that $\theta''$ in (\ref{pullbackart}) is a small extension Let $M'$ and $M''$ be lifts of $V$ over $R'$ and $R''$, respectively, and such that there exists an isomorphism of $R\A$-modules 
\begin{equation*}
\tau_R: R\otimes_{R',\theta'}M'\to R\otimes_{R'',\theta''}M''.
\end{equation*}
Let $M'''= M'\times_{R\otimes_{R',\theta'}M'}M''$. Since $M'$ and $M''$ are finitely generated as an $R'\A$-module and as an $R''\A$-module, respectively,  it follows that $M'''$ is as finitely generated as an $R'''\A$-module. Since $\theta''$ is surjective, and $M'$  and $M''$ are free over $R'$ and $R''$, respectively, it follows by \cite[Lemma 3.4]{sch} that $M'''$ is free over $R'''$. Moreover,  we also have that $R\otimes_{R'''}M'''\cong R\otimes_{R',\theta'}M'$ and thus we can have an isomorphism of left $\A$-modules $\phi''':\k\otimes_{R'''}M'''\to V$ as the composition
\begin{equation*}
\k\otimes_{R'''}M'''\cong \k\otimes_{R'}M'\xrightarrow{\phi'} V.
\end{equation*} 
Therefore, $M'''$ defines a lift of $V$ over $R'''$ such that 
\begin{align*}
\Fun_{V}(\pi')([M''', \phi''']) = [M', \phi'] \text{ and } \Fun_{V}(\pi'')([M''',\phi''']) = [M'', \phi''].
\end{align*}
Thus $\Theta$  in (\ref{thetapullback}) is surjective, which proves that (H$_1$) is satisfied by $\Fun_{V}$.  
\end{proof}

The following result can be proved by using the ideas in that of \cite[Prop. 4.3]{bleher14}. However, we decided to include a proof for the convenience of the reader.  

\begin{lemma}\label{lemma3.10}
Let $R$ be a fixed Artinian object in $\widehat{\Ca}$. If $M$ is a weak lift of $V$ over $R$, then the ring homomorphism $\sigma_{M}:R\to\End_{R\A}(M)$ coming from the action of $R$ on $M$ via scalar multiplication is an isomorphism of $R$-modules.
\end{lemma}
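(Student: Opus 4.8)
The plan is to verify that $\sigma_M$ is well defined and injective by elementary means, and then to establish surjectivity by induction on the length of $R$, using the standing hypothesis $\End_\A(V)=\k$ in force throughout this section. First, since $R$ is central in $R\A=R\otimes_\k\A$, multiplication by any $r\in R$ commutes with the $\A$-action, so $\sigma_M(r)$ is genuinely an $R\A$-endomorphism of $M$ and $\sigma_M$ is a homomorphism of $R$-algebras, in particular $R$-linear. As a weak lift, $M$ is finitely generated over $R\A$ and free over $R$ with $\k\otimes_R M\cong V$; its rank equals $n=\dim_\k V\geq 1$ because $V\neq 0$, so $M$ is a faithful $R$-module. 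Hence $rM=0$ forces $r=0$, which gives the injectivity of $\sigma_M$.

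For surjectivity I would induct on the finite length $\ell(R)$ of the Artinian ring $R$. The base case $\ell(R)=1$ is $R=\k$, where $M\cong V$ and $\End_{R\A}(M)=\End_\A(V)=\k=R$. For the inductive step, I choose a minimal nonzero ideal $I=tR\subseteq R$; minimality forces $\m_R I=0$, so $I\cong\k$ and $\ell(R/I)=\ell(R)-1$. Setting $R_0=R/I$ and $M_0=M/IM$, the module $M_0$ is again a weak lift of $V$ over $R_0$ (it is free over $R_0$ with $\k\otimes_{R_0}M_0\cong V$). Given $f\in\End_{R\A}(M)$, the relation $f(IM)=I\,f(M)\subseteq IM$ shows that $f$ descends to $f_0\in\End_{R_0\A}(M_0)$; by the inductive hypothesis $f_0=\sigma_{M_0}(\bar r)$ for some $\bar r\in R_0$. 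Lifting $\bar r$ to $r\in R$ along $R\twoheadrightarrow R_0$ and replacing $f$ by $f-\sigma_M(r)$, I may assume $f(M)\subseteq IM$.

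It then remains to show that any $f$ with $f(M)\subseteq IM$ lies in the image of $\sigma_M$, and this is exactly where $\End_\A(V)=\k$ enters. Since $\m_R I=0$ and $f$ is $R$-linear, $f(\m_R M)=\m_R f(M)\subseteq\m_R IM=0$, so $f$ factors through an $\A$-linear map $\bar f\colon V=M/\m_R M\to IM$. Because $M$ is free over $R$, multiplication by $t$ induces an $\A$-linear isomorphism $\mu_t\colon M/\m_R M\xrightarrow{\ \sim\ }tM=IM$ (its kernel is $\m_R M$ precisely by freeness). Hence $\mu_t^{-1}\circ\bar f\in\End_\A(V)=\k$ equals $c\cdot\mathrm{id}_V$ for some $c\in\k$, which unwinds to $f(m)=c\,t\,m$, that is $f=\sigma_M(ct)$; together with the reduction step this yields $f\in\mathrm{im}\,\sigma_M$ in general, so $\sigma_M$ is an isomorphism of $R$-modules. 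I expect the main obstacle to be the careful bookkeeping of the identification $IM\cong V$ and the verification that $\bar f$ is truly $\A$-linear (so that $\End_\A(V)=\k$ applies); the freeness of $M$ over $R$ is used crucially here, both to guarantee $\ker(\cdot\,t)=\m_R M$ and to ensure $M_0$ remains free over $R_0$ so the induction can proceed.
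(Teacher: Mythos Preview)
Your proof is correct and follows essentially the same approach as the paper's: both argue by induction along a small extension $R\twoheadrightarrow R_0=R/tR$ with $tR\cong\k$, reduce an arbitrary $f\in\End_{R\A}(M)$ modulo $tM$ to scalar multiplication by the inductive hypothesis, and then identify the residual map $M\to tM$ with an element of $\End_\A(V)=\k$ via the isomorphism $tM\cong V$ coming from freeness. Your write-up is in fact a bit more explicit about the base case and about why $\ker(\cdot\,t)=\m_R M$, but there is no substantive difference in strategy.
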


\begin{proof}
Let $R_0$ be an Artinian object in $\widehat{\Ca}$ such that  $\theta: R\to R_0$ is a small extension, and for all weak lifts $M_0$ of $V$ over $R_0$, the ring homomorphism $\sigma_{M_0}: R_0\to \End_{R_0\A}(M_0)$ coming from the action of $R_0$ on $M_0$ via scalar multiplication is an isomorphism of $R_0$-modules. Let $f\in \End_{R\A}(M)$. Then $\theta$ induces $f_0\in \End_{R_0\A}(M_0)$, where $M_0=R_0\otimes_{R,\theta}M$. By assumption, there exists $r_0\in R_0$ such that $f_0=\mu_{r_0}$, where $\mu_{r_0}$ denotes multiplication by $r_0$.  Let $r\in R$ such that $\theta(r)=r_0$, and let $\mu_r:M\to M$ be the scalar multiplication morphism by $r$.  Thus $\mathrm{id}_{R_0}\otimes \mu_r=\mu_{r_0}$. Let $\beta$ be the endomorphism of $M$ given by  $f-\mu_r$. It follows that $\mathrm{id}_{R_0}\otimes \beta=0$. Since $\theta$ is a small extension, we have that there is $t\in R$ such that $\ker \theta = t R$. This implies that $\mathrm{im}\, \beta\subseteq tM$ and thus $\beta \in \Hom_{R\A}(M, tM)$. Since $tR\cong \k$, it follows that $tM\cong \k\otimes_RM$ and there is an isomorphism $\Hom_{R\A}(M,tM)\cong\Hom_{\A}(\k\otimes_RM,\k\otimes_RM)\cong\End_{R\A}(V)$. Let $\beta_0$ be the image of $\beta$ under this isomorphism. Then there exist $\lambda_0\in \k$ such that $\beta_0=\mu_{\lambda_0}$, where $\mu_{\lambda_0}:V\longrightarrow V$ is multiplication by $\lambda_0$.  Thus $\beta=\mu_{t\lambda}$, where $\lambda\in R$ is such that $t\lambda$ gives $\lambda_0$ under the isomorphism $tR\cong \k$, and $\mu_{t\lambda}: M\to tM$ is multiplication by $t\lambda$. This implies that  $f= \mu_{t\lambda+r}$ in $\End_{R\A}(M)$ with $t\lambda + r\in R$ and thus $\sigma_{M}$ is surjective. Since $M$ is assumed to be free over $R$, it follows that $\sigma_M$ is also injective. This finishes the proof of Lemma \ref{lemma3.10}.  
\end{proof}

\begin{lemma}\label{lemma3.13}
Consider the pullback diagram of Artinian rings in $\widehat{\Ca}$ and let $\Theta$ be as in (\ref{thetapullback}). If either $R = \k$ or $\End_{\A}(V) = \k$, then $\Theta$ is injective. 
\end{lemma}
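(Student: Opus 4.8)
The plan is to translate the equality of images under $\Theta$ into a fibre-product patching problem and then to solve it, using Lemma \ref{lemma3.10} in the case $\End_\A(V)=\k$. Throughout I may assume that $\theta''$ is surjective, as it is in every diagram to which Schlessinger's criteria are applied (for (H$_2$) one has $\theta''\colon\k[\epsilon]\to\k$, and for (H$_4$) one has $R'=R''$ with $\theta'=\theta''$ a small extension). Suppose $[M_1,\phi_1]$ and $[M_2,\phi_2]$ lie in $\Fun_V(R''')$ with $\Theta([M_1,\phi_1])=\Theta([M_2,\phi_2])$. Writing $M_i'=R'\otimes_{R''',\pi'}M_i$ and $M_i''=R''\otimes_{R''',\pi''}M_i$, this equality supplies isomorphisms of lifts $\alpha'\colon M_1'\to M_2'$ over $R'$ and $\alpha''\colon M_1''\to M_2''$ over $R''$, each compatible with the structure maps to $V$. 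Since every $M_i$ is free over $R'''=R'\times_R R''$, the standard patching of modules over a fibre product (the inverse of the construction in the proof of Lemma \ref{lemma3.12}) yields $M_i\cong M_i'\times_{R\otimes_{R'''}M_i}M_i''$ as $R'''\A$-modules. Consequently, to manufacture an isomorphism $M_1\to M_2$ of lifts it suffices to arrange that the base changes of $\alpha'$ and $\alpha''$ along $\theta'$ and $\theta''$ coincide as isomorphisms $R\otimes_{R'''}M_1\to R\otimes_{R'''}M_2$.

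First I would record the behaviour over the residue field. Denoting by $\bar\alpha'$ and $\bar\alpha''$ these two base changes to $R$, the compatibility of $\alpha'$ and $\alpha''$ with the structure maps to $V$ forces both to reduce, over $\k$, to the same isomorphism $\phi_2^{-1}\circ\phi_1\colon\k\otimes_{R'''}M_1\to\k\otimes_{R'''}M_2$; in particular $\bar\alpha'\equiv\bar\alpha''\pmod{\m_R}$. When $R=\k$ this is already the equality $\bar\alpha'=\bar\alpha''$, so no correction is necessary and patching produces an $R'''\A$-isomorphism $\Phi\colon M_1\to M_2$ at once.

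In the case $\End_\A(V)=\k$ the base changes $\bar\alpha'$ and $\bar\alpha''$ may differ, and the crux of the argument is to correct one of them. Consider the discrepancy $\delta=(\bar\alpha'')^{-1}\circ\bar\alpha'\in\mathrm{Aut}_{R\A}(R\otimes_{R'''}M_1)$, which by the previous paragraph is congruent to the identity modulo $\m_R$. As $R$ is Artinian and $R\otimes_{R'''}M_1$ is a lift of $V$ over $R$, Lemma \ref{lemma3.10} identifies $\End_{R\A}(R\otimes_{R'''}M_1)$ with $R$ via scalar multiplication; hence $\delta=\mu_r$ for a unique unit $r\in R$ with $r\equiv 1\pmod{\m_R}$. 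I would then lift $r$ along the surjection $\theta''\colon R''\to R$ to a preimage $r''\in R''$; since $\theta''$ induces the identity on the residue field $\k$, any such $r''$ satisfies $r''\equiv 1\pmod{\m_{R''}}$ and is therefore a unit. Replacing $\alpha''$ by $\alpha''\circ\mu_{r''}$ keeps it an isomorphism of lifts---because $\mu_{r''}$ reduces to the identity over $\k$, the compatibility with the structure maps is unchanged---while its base change to $R$ becomes $\bar\alpha''\circ\mu_r=\bar\alpha''\circ\delta=\bar\alpha'$. Patching the corrected pair then furnishes an $R'''\A$-isomorphism $\Phi\colon M_1\to M_2$.

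In either case it remains to verify that $\Phi$ is an isomorphism of lifts: its reduction over $\k$ equals $\bar\alpha'=\phi_2^{-1}\circ\phi_1$, so $\phi_1=\phi_2\circ(\mathrm{id}_\k\otimes\Phi)$ and therefore $[M_1,\phi_1]=[M_2,\phi_2]$ in $\Fun_V(R''')$, which is the injectivity of $\Theta$. I expect the main obstacle to be exactly the $\End_\A(V)=\k$ case: reconciling $\bar\alpha'$ and $\bar\alpha''$ over $R$ is where Lemma \ref{lemma3.10}---that every automorphism of a lift is scalar multiplication by a unit---together with the liftability of units along $\theta''$ is indispensable.
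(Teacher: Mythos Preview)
Your proof is correct and follows essentially the same route as the paper: form the discrepancy automorphism $(\bar\alpha'')^{-1}\circ\bar\alpha'$ on the common reduction over $R$, use Lemma \ref{lemma3.10} (in the $\End_\A(V)=\k$ case) to identify it with multiplication by a unit $r\in R$, lift $r$ along the surjection $\theta''$ to correct $\alpha''$, and then patch over the fibre product. If anything, you are slightly more careful than the paper in two places---you make explicit the standing surjectivity of $\theta''$ needed to lift $r$, and you track that $r\equiv 1\pmod{\m_R}$ so that the corrected $\alpha''$ still respects the structure maps to $V$.
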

\begin{proof}
Let $(M', \phi')$ and $(M'', \phi'')$ be lifts of $V$ over $R'''$ such that there are isomorphisms of $R'\A$-modules and $R''\A$-modules, respectively
\begin{align*}
\tau_{R'} : R'\otimes_{R''', \pi'}M'\to R'\otimes_{R''', \pi'}M'' \text{ and }  \tau_{R''}: R''\otimes_{R''', \pi''}M' \to R''\otimes_{R''', \pi''}M'',
\end{align*}
such that $\phi' = \phi''\circ (\mathrm{id}_\k\otimes_{R'} \tau_{R'})$ and $\phi' = \phi''\circ (\mathrm{id}_\k\otimes_{R''} \tau_{R''})$ 
Let $\varphi_{R}  = (\mathrm{id}_R\otimes \tau_{R'})^{-1}(\mathrm{id}_R\otimes \tau_{R''})\in \End_{R\A}(R\otimes_{\theta''\circ \pi''}M'')$. If $R = \k$, then $\varphi_{\k} = \mathrm{id}_{V}$. Assume next that $\End_\A(V) = \k$. Since $R\otimes_{\theta''\circ \pi''}M''$ is a lift of $V$ over $R$, it follows by Lemma \ref{lemma3.10} that there exists an scalar $r\in R^\ast$ such that $\varphi_R= \mu_r$. Let $r''\in R''$ such that $\theta''(r'')= r$ and replace $\tau_{R''}$ for $\mu_{r''}^{-1}\tau_{R''}$, which gives that $\varphi_R = \mathrm{id}_{R\otimes_{\theta''\circ \pi''}M'}$. Thus in both situations, we can assume that $\varphi_R$ is just the identity on $R\otimes_{\theta''\circ \pi''}M'$. In particular $\mathrm{id}_R\otimes\tau_{R'} = \mathrm{id}_{R''}\otimes\tau_{R''}$.  By using the isomorphisms of $R''\A$-modules

\begin{align}
M' &= (R'\otimes_{R''', \pi'}M')\times_{R\otimes_{\theta'\circ \pi'}M'}(R''\otimes_{R''', \pi''}M'),\\\notag
M'' &= (R'\otimes_{R''', \pi'}M'')\times_{R\otimes_{\theta'\circ \pi'}M''}(R''\otimes_{R''', \pi''}M''),
\end{align}
\noindent
we obtain a well-defined morphism of $R'''\A$-modules $\tau_{R'''}: M'\to M''$ given by $\tau_{R'''} = (\tau_{R'}, \tau_{R''})$, which is an isomorphism that satisfies $\varphi' = \varphi''\circ (\mathrm{id}_\k\otimes \tau_{R'''}) $. This proves that $\Theta$ is injective and finishes the proof of Lemma \ref{lemma3.13}.  
\end{proof}

As a direct consequence of Lemma \ref{lemma3.13} we obtain that $\Fun_V$ always satisfies Schlessinger's criteria \textup{(H$_2$)} as well as \textup{(H$_4$)} provided that $\End_\A(V) = \k$.

\begin{lemma}\label{lemma3.14}
There exists an isomorphism of $\k$-vector spaces $t_{V}\cong \Ext_{\A}^1(V,V)$. 
\end{lemma}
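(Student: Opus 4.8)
The plan is to exhibit an explicit bijection between the tangent space $t_V=\Fun_V(\k[\epsilon])$ of deformations of $V$ over the dual numbers and the extension group $\Ext^1_\A(V,V)$, and then verify that it respects the $\k$-vector space structures on both sides. First I would describe the forward map. Let $(M,\phi)$ be a lift of $V$ over $\k[\epsilon]=\k\oplus\epsilon\k$. Since $M$ is finitely generated and free over $\k[\epsilon]$ and $\epsilon^2=0$, multiplication by $\epsilon$ has kernel and image both equal to $\epsilon M$, so it induces an isomorphism $M/\epsilon M\xrightarrow{\sim}\epsilon M$ together with a short exact sequence of $\A$-modules
\[
0\to \epsilon M\to M\to M/\epsilon M\to 0.
\]
Using $\phi:\k\otimes_{\k[\epsilon]}M=M/\epsilon M\xrightarrow{\sim}V$ to identify the quotient with $V$, and the composite $\epsilon M\cong M/\epsilon M\cong V$ to identify the submodule with $V$, this short exact sequence yields a class in $\Ext^1_\A(V,V)$, and I would define the forward map by $[M,\phi]\mapsto[0\to V\to M\to V\to 0]$.

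Next I would construct the inverse. Given an extension $0\to V\xrightarrow{\iota}E\xrightarrow{\pi}V\to 0$ of $\A$-modules, I make $E$ into a left $\k[\epsilon]\A$-module by letting $\epsilon$ act as $\iota\circ\pi$; since $\pi\circ\iota=0$ we have $\epsilon^2=0$, and because $\epsilon E=\iota(V)$ and $E/\epsilon E\cong V$ both have dimension $\dim_\k V$, multiplication by $\epsilon$ induces an isomorphism $E/\epsilon E\xrightarrow{\sim}\epsilon E$, so $E$ is free over $\k[\epsilon]$ with $\k\otimes_{\k[\epsilon]}E\cong V$, hence a lift. I would then check that these assignments descend to the correct equivalence classes and are mutually inverse: an isomorphism of lifts $f:M\to M'$ with $\phi=\phi'\circ(\mathrm{id}_\k\otimes f)$ is exactly a $\k[\epsilon]\A$-isomorphism commuting with multiplication by $\epsilon$ and inducing $\mathrm{id}_V$ on $M/\epsilon M$, which unwinds to a morphism of the associated short exact sequences restricting to the identity on sub and quotient, i.e. an equivalence of extensions.

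Finally, and this I expect to be the main obstacle, I would verify that the bijection is $\k$-linear for the Schlessinger structure on $t_V$ from \cite[Lemma 2.10]{sch}. Scalar multiplication by $c\in\k$ on $t_V$ is induced by the $\k$-algebra endomorphism $\epsilon\mapsto c\epsilon$ of $\k[\epsilon]$; tracing this through the forward map shows it multiplies the extension class by $c$. Addition on $t_V$ is defined using the canonical bijection $\Fun_V(\k[\epsilon]\times_\k\k[\epsilon])\cong t_V\times t_V$, which is available because $\Fun_V$ satisfies \textup{(H$_2$)} by the remark following Lemma \ref{lemma3.13}, followed by the map induced by addition $\k[\epsilon]\times_\k\k[\epsilon]\to\k[\epsilon]$; I would check that under the forward map this recovers the Baer sum of extensions. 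Both computations reduce to the explicit lift attached to an extension, but matching the two additive structures is the delicate point. Once the isomorphism $t_V\cong\Ext^1_\A(V,V)$ is established, the Ext-finiteness of $\Ab_\k$ gives $\dim_\k\Ext^1_\A(V,V)<\infty$, which will in turn yield Schlessinger's criterion \textup{(H$_3$)}.
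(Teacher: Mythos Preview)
Your proposal is correct and follows essentially the same approach as the paper: both construct the forward map from a lift $(M,\phi)$ over $\k[\epsilon]$ to the extension $0\to V\to M\to V\to 0$ using $\epsilon M\cong V\cong M/\epsilon M$, and both build the inverse by endowing the middle term of an extension with a $\k[\epsilon]$-action via $\epsilon\cdot e=(\iota\circ\pi)(e)$. If anything, you are more careful than the paper, which simply asserts $\k$-linearity by reference to \cite[\S 22]{mazur} without spelling out the compatibility with scalar multiplication and Baer sum that you outline.
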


\begin{proof}
As in the proof of \cite[Prop. 2.1]{blehervelez}, the isomorphism $t_{V}\cong \Ext_{\A}^1(V,V)$ is established in a similar manner to that in \cite[\S 22]{mazur}. Namely, for a given lift $(M, \phi)$ of $V$ over the ring of dual numbers $\k[\epsilon]$, we have $\Ar$-module isomorphisms $\epsilon M\cong V$ and $M/\epsilon M\cong V$. Therefore, we obtain a short exact sequence of left $\A$-modules $\mathcal{E}_{M}: 0\to V\to M\to V\to 0$.
Thus we have a well defined $\k$-linear map $s:t_{V}\to \Ext_{\A}^1(V,V)$ defined as $s([M,\phi])= \mathcal{E}_{M}$. Now, if $\mathcal{E}: 0\to V\xrightarrow{\alpha_1}V_1\xrightarrow{\alpha_2}V\to 0$
is an extension of $\A$-modules, then $V_1$ is naturally a $\k[\epsilon]\A$-module by letting $\epsilon\cdot v_1 = (\alpha_1\circ \alpha_2)(v_1)$ for all $v_1\in V_1$.   In this way, $V_1$ is free over $\k[\epsilon]$ and there is a left $\A$-module isomorphism $\psi: V_1/\epsilon V_1\to V$. Therefore, $(V_1,\psi)$ defines a lift of $V$ over $\k[\epsilon]$. Thus the $\k$-linear map $s': \Ext_{\A}^1(V,V)\to t_{V}$ defined as $s'(\mathcal{E})= [V_1, \psi]$ gives an inverse of $s$, which implies that $t_{V}\cong \Ext_{\A}^1(V,V)$ as $\k$-vector spaces. 
\end{proof}

It follows from Lemma \ref{lemma3.14} and the fact that $V$ is an object in the Ext-finite category $\Ab_\k$ that $\Fun_V$ always satisfies Schlessinger's criterion \textup{(H$_3$)}.

In order to finish the proof of Theorem \ref{thm3}, we next prove the continuity of the deformation functor $\widehat{\Fun}_{V}$ in the next Lemma \ref{lemma3.15}. As with the proof of Lemma \ref{lemma3.14}, the proof of Lemma \ref{lemma3.15} can be obtained by adapting the arguments in the proofs of \cite[Prop. 2.1]{blehervelez} and \cite[Prop. 2.4.4]{blehervelez2} to our context (see also \cite[\S 20, Prop. 1]{mazur}). We decided to include a proof for the convenience of the reader. 

\begin{lemma}\label{lemma3.15}
The functor $\widehat{\Fun}_{V}:\widehat{\Ca}\to \Sets$ is continuous.
\end{lemma}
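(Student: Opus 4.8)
The plan is to show that the natural map
\[
\Psi_R:\widehat{\Fun}_V(R)\longrightarrow \invlim_i\widehat{\Fun}_V(R/\m_R^i),\qquad [M,\phi]\longmapsto \bigl([R/\m_R^i\otimes_RM,\phi_i]\bigr)_i,
\]
is a bijection for every $R$ in $\widehat{\Ca}$, writing $R_i=R/\m_R^i$ and using that $R=\invlim_iR_i$ since $R$ is complete. After checking that $\Psi_R$ is well defined (each reduction $R_i\otimes_RM$ is a lift over the Artinian ring $R_i$, and these are visibly compatible), I would establish surjectivity and injectivity separately.

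For surjectivity I start from a compatible system $\xi=([M_i,\phi_i])_i$. Compatibility means that for each $i$ there is an isomorphism of lifts $\alpha_i\colon R_i\otimes_{R_{i+1}}M_{i+1}\xrightarrow{\sim}M_i$; fixing one such $\alpha_i$ and composing with the canonical surjection $M_{i+1}\twoheadrightarrow R_i\otimes_{R_{i+1}}M_{i+1}$ turns $\{M_i\}$ into an inverse system of $\A$-modules, and I set $M:=\invlim_iM_i$, which is naturally an $R\A$-module. Each $M_i$ is free over $R_i$ of rank $n:=\dim_\k V$ because $\k\otimes_{R_i}M_i\cong V$; lifting a $\k$-basis of $M_1\cong V$ successively through these surjections and invoking Nakayama over the local Artinian rings $R_{i+1}$ (a lift of an $R_i$-basis is a generating set of the correct cardinality, hence a basis of the free rank-$n$ module) produces compatible bases. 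Consequently $M\cong R^n$ is free of finite rank over $R$, so finitely generated over $R\A$, with $R_i\otimes_RM\cong M_i$ compatibly for all $i$; putting $\phi:=\phi_1\colon\k\otimes_RM=M_1\to V$ exhibits $(M,\phi)$ as a lift of $V$ over $R$ with $\Psi_R([M,\phi])=\xi$.

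For injectivity, suppose $(M,\phi)$ and $(M',\phi')$ over $R$ have the same image, so that for every $i$ the set $\mathrm{Iso}_i$ of isomorphisms of lifts $R_i\otimes_RM\to R_i\otimes_RM'$ is nonempty. Since $M=\invlim_iR_i\otimes_RM$ and likewise for $M'$ (both are $\m_R$-adically complete, being finitely generated over the complete Noetherian ring $R$), any element of $\invlim_i\mathrm{Iso}_i$ assembles into an isomorphism of lifts $M\to M'$; thus it suffices to prove $\invlim_i\mathrm{Iso}_i\neq\emptyset$. Each $\mathrm{Iso}_i$ is a torsor under the group $G_i$ of automorphisms of the lift $R_i\otimes_RM$ reducing to $\mathrm{id}_V$, and the reductions organize $\{\mathrm{Iso}_i\}$ and $\{G_i\}$ into towers. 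Because a tower of nonempty sets with surjective transition maps has nonempty inverse limit, the whole argument reduces to the surjectivity of the reduction maps $G_{i+1}\to G_i$.

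The main obstacle is precisely this surjectivity, and here the hypothesis $\End_\A(V)=\k$ does the decisive work through Lemma \ref{lemma3.10}: since $\sigma_{M_i}\colon R_i\to\End_{R_i\A}(R_i\otimes_RM)$ is an isomorphism, every automorphism of the lift is multiplication by a unit, and the requirement of reducing to $\mathrm{id}_V$ forces that unit into $1+\m_{R_i}$. Hence $G_i\cong 1+\m_{R_i}$, and $G_{i+1}\to G_i$ is the surjection $1+\m_{R_{i+1}}\twoheadrightarrow 1+\m_{R_i}$ induced by $R_{i+1}\twoheadrightarrow R_i$, which settles injectivity. (When $\End_\A(V)\neq\k$ one instead runs a Mittag--Leffler argument: the images $\mathrm{im}(G_{i+k}\to G_i)$ form a descending chain of closed subgroups of the finite-dimensional unit group $\End_{R_i\A}(R_i\otimes_RM)^\times$ and therefore stabilize, giving the Mittag--Leffler condition for $\{\mathrm{Iso}_i\}$ and hence a nonempty limit.)
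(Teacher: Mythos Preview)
Your surjectivity argument matches the paper's: both rigidify a compatible family of deformation classes into an actual inverse system, take the limit, and check freeness (you via lifted bases, the paper by the same mechanism).

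For injectivity the approaches diverge. The paper never invokes Lemma~\ref{lemma3.10}; instead it fixes arbitrary level-wise isomorphisms $\gamma_i$, records their incompatibility by the nilpotent ``error'' endomorphisms $\zeta_i=\gamma_i^{-1}\circ(\mathrm{id}_{R_i}\otimes_{R_{i+1}}\gamma_{i+1})-\mathrm{id}$, lifts each $\zeta_i$ to higher levels, and then corrects $\gamma_j$ by postcomposing with the chain of $(\mathrm{id}+\zeta_i^{(j)})^{-1}$ to obtain a strictly compatible tower $\{f_j\}$ whose inverse limit is the desired isomorphism. This construction is uniform in $V$ and does not use $\End_\A(V)=\k$.

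Your torsor argument under the extra hypothesis $\End_\A(V)=\k$ is correct and elegant: once Lemma~\ref{lemma3.10} identifies $G_i$ with $1+\m_{R_i}$, surjectivity of the transition maps is automatic and the nonemptiness of $\invlim_i\mathrm{Iso}_i$ follows. The problem is that Lemma~\ref{lemma3.15} is stated and used in Theorem~\ref{thm3} \emph{without} assuming $\End_\A(V)=\k$ (continuity is asserted in general; only representability needs the endomorphism hypothesis). Thus your principal argument proves only a special case, and the burden falls on your parenthetical Mittag--Leffler sketch. That sketch is not yet a proof: you claim the images $\mathrm{im}(G_{i+k}\to G_i)$ are ``closed subgroups'' that ``therefore stabilize,'' but you have not said closed in which topology, nor why closedness forces the descending chain to stabilize, nor why stabilization for the groups $G_i$ transfers to the torsors $\mathrm{Iso}_i$ (which is what Mittag--Leffler actually requires). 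One can push this through by viewing each $G_i$ as an affine algebraic group over $\k$, using that images of algebraic group morphisms are Zariski-closed, and invoking Noetherianity; but these steps must be supplied before the general case is established.
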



\begin{proof}
For all objects $R$ in $\widehat{\Ca}$, we consider the natural map 
\begin{equation}\label{contfunt}
\Gamma: \widehat{\Fun}_{V}(R)\to \invlim_i\Fun_{V}(R_i)
\end{equation}
defined by $\Gamma([M, \phi])=\{[M_i,\phi_i]\}_{i=1}^\infty$, where for all $i\geq 1$, $R_i=R/\m_R^i$, $\pi_i:R\to R_i$ is the natural projection, and $[M_i,\phi_i]=\widehat{\Fun}_V(\pi_i)([M,\phi])$. We first prove that $\Gamma$ as in (\ref{contfunt}) is surjective. Let $\{[N_i,\psi_i]\}_{i=1}^\infty\in  \invlim_i\Fun_V(R_i)$. Then for each $i\geq 1$, there exists an isomorphism of $R_i\A$-modules
\begin{equation*}
\widehat{\alpha}_i:R_i\otimes_{R_{i+1}}N_{i+1}\to N_i
\end{equation*}
such that $\psi_i\circ (\mathrm{id}_\k\otimes \widehat{\alpha}_i)=\psi_{i+1}$. Let $(N'_1,\psi'_1)= (N_1,\psi_1)$. For each $i\geq 2$, the natural surjection $R_{i+1}\to R_i$ induces a surjective $R_{i+1}\A$-module homomorphism
\begin{equation*}
N_{i+1}\to R_i\otimes_{R_{i+1}}N_{i+1}.
\end{equation*}
\noindent
Hence, for all $i\geq 2$, define $(N'_i,\psi'_i)$ as follows: $N'_i = R_i\otimes_{R_i+1}N_{i+1}$ and $\psi'_i$ is the composition
\begin{equation*}
\k\otimes_{R_i}N'_i\cong \k\otimes_{R_{i+1}}N_{i+1}\xrightarrow{\psi_{i+1}}V.
\end{equation*}
Then for all $i\geq 1$, $(N'_i,\psi'_i)$ is a lift $V$ over $R_i$, which is  also isomorphic to $(N_i, \psi_i)$  and there exists surjective $R_i\Ar$-module homomorphisms 
\begin{equation*}
\beta^{i+1}_i: N'_{i+1}\to N'_i 
\end{equation*}
that induces a natural isomorphism $R_i\otimes_{R_{i+1}}N'_{i+1} = N'_i$ which preserves the $\Ar$-module structure. Moreover $\psi'_i$ is equal to the composition 
\begin{equation*}
\k\otimes_{R_i}N'_i=\k\otimes_{R_i}(R_i\otimes_{R_{i+1}}N'_{i+1})\cong \k\otimes_{R_{i+1}}N'_{i+1}\xrightarrow{\psi'_{i+1}}V.  
\end{equation*}
Therefore $\{[N'_i, \psi_i],\beta^{i+1}_i\}_{i=1}^\infty$ forms an inverse system of deformations of $\V$. If we let $N'=\invlim_iN'_i$  and $\psi=\invlim_i\psi'_i$, then we have that $(N', \psi')$ is a lift of $V$ over $R=\invlim_iR_i$ such that $\Gamma([N', \psi'])=\{[N_i, \psi_i]\}_{i=1}^\infty$. This proves that $\Gamma$ is surjective. In order to prove that $\Gamma$ is injective, assume that $[M, \phi]$ and $[M', \phi']$ are lifts of $V$ over $R$ such that $\Gamma([M, \phi])=\Gamma([M', \phi'])$. Then for all $i\geq 1$, there is an isomorphism of $R_i\A$-modules 
\begin{equation*}
\gamma_i:R_i\otimes_RM\to R_i\otimes_RM', 
\end{equation*}
such that $\phi'\circ (\mathrm{id}_\k\otimes_{R_i}\gamma_i)=\phi$. 
For each $i\geq 1$, define 
\begin{equation*}
\zeta_i=\gamma_i^{-1}\circ (\mathrm{id}_{R_i}\otimes_{R_{i+1}}\gamma_{i+1})-\mathrm{id}_{R_i\otimes_RM},
\end{equation*}
and set $\zeta_i^{(i)}=\zeta_i$. Observe that $\mathrm{id}_{\k}\otimes_{R_i}\zeta_i=0$. Since the $R_i$ are Artinian, for each $i\geq 2$, and $i\leq \ell< j$, we can lift the $R_\ell\A$-module homomorphism $\zeta_i^{(\ell)}\in \End_{R_\ell\A}(R_\ell\otimes_R M)$ to an $R_j\A$-module homomorphism $\zeta_i^{(j)}\in \End_{R_j\A}(R_j\otimes_R M)$ such that $\mathrm{id}_{R_\ell}\otimes_{R_j}\zeta_i^{(j)}=\zeta_i^{(\ell)}$. Moreover, since for all $2\leq i\leq \ell< j$, the morphism $\zeta_i^{(j)}$ is nilpotent, it follows that $\mathrm{id}_{R_j\otimes_RM} + \zeta_{i}^{(j)}$ is invertible.  Next let $f_1 = \gamma_1$ and $f_2 = \gamma_2$, and for all $j\geq 3$, let 
\begin{align*}
f_j=\gamma_j\circ (\mathrm{id}_{R_j\otimes_RM} + \zeta_{j-1}^{(j)})^{-1}\circ (\mathrm{id}_{R_j\otimes_RM} + \zeta_{j-2}^{(j)})^{-1}\circ \cdots \circ (\mathrm{id}_{R_j\otimes_RM} + \zeta_{2}^{(j)})^{-1}
\end{align*}
Note that $\mathrm{id}_{R_1}\otimes_{R_2}f_2=f_1$. On the other hand, if $j\geq 2$, then 
\begin{align*}
\mathrm{id}_{R_j}\otimes_{R_{j+1}}f_{j+1}&= (\mathrm{id}_{R_j}\otimes_{R_{j+1}}\gamma_{j+1})\circ  (\mathrm{id}_{R_j\otimes_RM} + \zeta_{j}^{(j)})^{-1}\circ (\mathrm{id}_{R_j\otimes_RM} + \zeta_{j-1}^{(j)})^{-1}\circ \cdots \circ (\mathrm{id}_{R_j\otimes_RM} + \zeta_{2}^{(j)})^{-1}\\
&= \gamma_j\circ (\mathrm{id}_{R_j\otimes_RM}+\zeta_j)\circ  (\mathrm{id}_{R_j\otimes_RM} + \zeta_j)^{-1}\circ (\mathrm{id}_{R_j\otimes_RM} + \zeta_{j-1}^{(j)})^{-1}\circ \cdots \circ (\mathrm{id}_{R_j\otimes_RM} + \zeta_{2}^{(j)})^{-1}\\
&=f_j.
\end{align*}
Moreover, for each $i\geq 1$, we have 
\begin{align*}
\phi'\circ (\mathrm{id}_\k\otimes_{R_i} f_i)&= \phi'\circ (\mathrm{id}_\k\otimes_{R_i} \gamma_i)\circ (\mathrm{id}_{\k\otimes_R M}+\mathrm{id}_\k\otimes_{R_{i-1}}\zeta_{i-1})^{-1}\circ \cdots \circ (\mathrm{id}_{\k\otimes_R M}+\mathrm{id}_\k\otimes_{R_2}\zeta_{2})^{-1}\\
&= \phi. 
\end{align*}
Let $f=\invlim_if_i$. Then $f:M\to M'$ is an isomorphism of $R\A$-modules such that $\phi'\circ (\mathrm{id}_\k\otimes_Rf)= \phi$. This proves that $\Gamma$ is injective. This finishes the proof of Lemma \ref{lemma3.15} and therefore the proof of Theorem \ref{thm3}.
\end{proof}

\begin{remark}
Assume that $(M,\phi)$ and $(M', \phi')$ are lifts of $V$ over $R$ such that there exists an isomorphism of $R\A$-modules $f:M\to M'$. Since $\End_\A(V)=\k$, we can re-define $f$ to $\overline{f}$ by using multiplication by an invertible scalar $\lambda \in R^\ast$ such that $\phi'\circ (\mathrm{id}_\k\otimes \overline{f}) = \phi$. Therefore $[M,\phi] = [M,\phi']$. In this situation we have that there exists a natural equivalence between the deformation functor $\widehat{\Fun}_V$ and the {\it weak deformation functor} $\widehat{\Fun}^w_V$ (as discussed in \cite{rizzo-velez}). Therefore the universal deformation ring $R(\A,V)$ and the weak universal deformation ring $R^w(\A,V)$ (as discussed in \cite{rizzo-velez}) are isomorphic in $\widehat{\Ca}$.   
\end{remark}

We need the following result from \cite[Thm. 1.1]{rizzo-velez}.

\begin{theorem}\label{thm8}
Let $\A$ be a (not necessarily finite dimensional) $\k$-algebra and $V$ be an indecomposable left $\A$-module with $\dim_\k V<\infty$ and $\dim_\k\Ext_\A^1(V,V)=1$. Assume that $V$ has a weak universal deformation ring $R^w(\A,V)$, and that there exists an ordered sequence of indecomposable finite dimensional left $\A$-modules (up to isomorphism) $\mathscr{L}_V= \{V_0,V_1,\ldots\}$ with $V_0 = V$ and such that for each $\ell \geq 1$, there exist a surjective morphism  $\epsilon_\ell: V_\ell\to V_{\ell-1}$, and an injective morphism $\iota_\ell: V_{\ell-1}\to V_\ell$ such that the composition $\sigma_\ell = \iota_\ell\circ \epsilon_\ell$ satisfies that $\ker \sigma_\ell = V_0$, $\mathrm{im}\,\sigma_\ell^\ell \cong V_0$, $\sigma_\ell^{\ell+1}=0$, and $\mathscr{L}_V$ is maximal in the sense that if there is another ordered sequence of indecomposable left $\A$-modules $\mathscr{L}'_V$ with these properties, then $\mathscr{L}'_V\subseteq \mathscr{L}_V$. 
\begin{enumerate}
\item If $\mathscr{L}_V$ is finite, and its last element, say $V_N$, satisfies  $\dim_\k\Hom_\A(V_N,V) =1$ and $\Ext_\A^1(V_N,V)=0$, then  $R^w(\A,V)\cong \k[\![t]\!]/(t^{N+1})$.
\item If $\mathscr{L}_V$ is infinite, then $R^w(\A,V)\cong \k[\![t]\!]$.
\end{enumerate}   
\end{theorem}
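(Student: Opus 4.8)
The plan is to pin down $R^w(\A,V)$ as a quotient of $\k[\![t]\!]$ and then to read off which quotient from the combinatorics of $\mathscr{L}_V$. Since $\dim_\k\Ext_\A^1(V,V)=1$, the tangent space $t_V\cong\Ext_\A^1(V,V)$ (as in Lemma \ref{lemma3.14}) is one-dimensional, so the weak universal deformation ring $R^w(\A,V)$ has one-dimensional $\m/\m^2$; hence it is topologically generated by a single element and is a quotient $\k[\![t]\!]/J$ with $J\subseteq(t^2)$. Because $\k[\![t]\!]$ is a complete discrete valuation ring, the only such ideals are $0$ and the powers $(t^n)$ with $n\geq2$, so it suffices to decide whether $J=0$ (giving $\k[\![t]\!]$) or $J=(t^{N+1})$ for the appropriate $N$.

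First I would convert the data of $\mathscr{L}_V$ into an explicit compatible system of lifts. For each $\ell\geq0$ set $R_\ell=\k[t]/(t^{\ell+1})$ and make $V_\ell$ into an $R_\ell\A$-module by letting $t$ act through the $\A$-linear endomorphism $\sigma_\ell$; this is well defined because $\sigma_\ell^{\ell+1}=0$. The key point is that $V_\ell$ is then free over $R_\ell$ of rank $\dim_\k V$: decomposing $\sigma_\ell$ into Jordan blocks, the conditions $\ker\sigma_\ell=V_0$, $\mathrm{im}\,\sigma_\ell^{\ell}\cong V_0$ and $\sigma_\ell^{\ell+1}=0$ say respectively that the number of blocks equals $\dim_\k V$, that the number of blocks of maximal size $\ell+1$ equals $\dim_\k V$, and that no block is larger; hence every block has size exactly $\ell+1$ and $V_\ell\cong R_\ell^{\dim_\k V}$ as $R_\ell$-modules. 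Multiplication by $\sigma_\ell^{\ell}$ then induces an $\A$-isomorphism $\k\otimes_{R_\ell}V_\ell=\mathrm{coker}\,\sigma_\ell\cong\mathrm{im}\,\sigma_\ell^{\ell}\cong V_0=V$, so $(V_\ell,\cong)$ is a lift of $V$ over $R_\ell$, while the surjections $\epsilon_\ell$ identify $R_{\ell-1}\otimes_{R_\ell}V_\ell\cong V_{\ell-1}$, making $\{[V_\ell]\}_\ell$ an element of $\invlim_\ell\Fun_V(R_\ell)$. Since each $V_\ell$ with $\ell\geq1$ is indecomposable, its reduction over $R_1=\k[\epsilon]$ is a non-split self-extension of $V$, i.e.\ a nonzero class in $t_V\cong\Ext_\A^1(V,V)$.

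For part (ii), an infinite $\mathscr{L}_V$ yields a compatible family over all the $R_\ell$, which by the continuity of $\widehat{\Fun}_V$ (Lemma \ref{lemma3.15}) is a single lift of $V$ over $\k[\![t]\!]=\invlim_\ell R_\ell$, hence a morphism $R^w(\A,V)\to\k[\![t]\!]$ in $\widehat{\Ca}$. If $R^w(\A,V)$ were $\k[\![t]\!]/(t^n)$, then for $m\geq n$ any morphism $R^w(\A,V)\to R_m$ would have to send the generator into $(t^2)$, because the image of $t$ is nilpotent of order $\leq n$ in $\k[t]/(t^{m+1})$ and this forces a vanishing linear term; the induced deformation over $R_m$ would then be trivial on tangent vectors, contradicting the fact from the previous paragraph that $V_m$ restricts to a nonzero tangent class. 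Therefore $J=0$ and $R^w(\A,V)\cong\k[\![t]\!]$.

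For part (i), the nontrivial lift $V_N$ over $R_N=\k[t]/(t^{N+1})$ makes the versal morphism $R^w(\A,V)\to R_N$ surjective (its generator has nonzero linear term), so $n\geq N+1$; it remains to show that $V$ admits no lift over $R_{N+1}=\k[t]/(t^{N+2})$ restricting to $V_N$, which simultaneously excludes $n>N+1$ and the possibility $\k[\![t]\!]$ (the latter because, via continuity, an unbounded tower of lifts would reproduce an infinite chain, contradicting finiteness of $\mathscr{L}_V$), thereby forcing $R^w(\A,V)\cong\k[\![t]\!]/(t^{N+1})$. I would argue by contradiction: by the Jordan-block analysis of the second paragraph applied one level higher, any such lift $\widetilde M$ would produce an indecomposable $V_{N+1}$ together with $\epsilon_{N+1},\iota_{N+1}$ and $\sigma_{N+1}=\iota_{N+1}\circ\epsilon_{N+1}$ satisfying every defining condition of $\mathscr{L}_V$, contradicting its maximality. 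Here the two numerical hypotheses on $V_N$ are exactly what should obstruct $\widetilde M$: the associated sequence $0\to V\to\widetilde M\to V_N\to0$ represents a class in $\Ext_\A^1(V_N,V)=0$ and hence splits over $\A$, and $\dim_\k\Hom_\A(V_N,V)=1$ then constrains the $\A$-homomorphisms available to build a genuinely new $\sigma_{N+1}$. Turning these two conditions together with maximality into the precise non-existence of a lift over $R_{N+1}$ is the delicate, obstruction-theoretic heart of the argument and the step I expect to be the main obstacle; by comparison the freeness computation and the continuity argument of part (ii) are routine.
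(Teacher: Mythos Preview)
The paper does not prove Theorem~\ref{thm8}; it is quoted verbatim from \cite[Thm.~1.1]{rizzo-velez} and used as a black box. There is therefore no proof in this paper against which to compare your proposal. What follows is an assessment of the proposal on its own merits.

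Your overall strategy is the natural one and is almost certainly the strategy of \cite{rizzo-velez}: realise each $V_\ell$ as a lift over $R_\ell=\k[t]/(t^{\ell+1})$ via the nilpotent operator $\sigma_\ell$, use the one-dimensional tangent space to force $R^w(\A,V)$ to be a quotient of $\k[\![t]\!]$, and then read off the length of the quotient from the length of $\mathscr{L}_V$. The Jordan-block count showing that $V_\ell$ is free over $R_\ell$ is clean and correct.

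Two points require more care than you indicate. First, the hypotheses do \emph{not} give $\epsilon_\ell\circ\iota_\ell=\sigma_{\ell-1}$, so the induced $t$-action on $V_\ell/\sigma_\ell^{\ell}V_\ell\cong V_{\ell-1}$ is $\epsilon_\ell\iota_\ell$, not $\sigma_{\ell-1}$; you must argue that these two $R_{\ell-1}\A$-structures on $V_{\ell-1}$ are isomorphic before invoking continuity. Second, your sentence ``since each $V_\ell$ is indecomposable, its reduction over $R_1$ is a non-split self-extension'' is a non sequitur for $\ell\geq 2$: indecomposability of $V_\ell$ says nothing direct about $V_\ell/\sigma_\ell^{2}V_\ell$. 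Both issues evaporate once compatibility is established, since then every reduction to $R_1$ is the indecomposable $V_1$.

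For part~(i) you correctly locate the crux but stop short of the argument; it can in fact be completed along the lines you suggest. A lift $\widetilde M$ over $R_{N+1}$ sits in $0\to V\to\widetilde M\to V_N\to 0$, which splits over $\A$ since $\Ext^1_\A(V_N,V)=0$. Writing $\widetilde M\cong V\oplus V_N$ and computing the $t$-action, one finds $t\cdot(v,w)=(\phi(w),\sigma_N(w))$ for some $\phi\in\Hom_\A(V_N,V)$. Because $\dim_\k\Hom_\A(V_N,V)=1$ and the surjection $\epsilon_1\circ\cdots\circ\epsilon_N\colon V_N\to V$ is nonzero, every such $\phi$ factors through $\epsilon_N$ and hence vanishes on $\ker\sigma_N=\ker\epsilon_N$. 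Then $\ker(t)\supseteq V\oplus\ker\sigma_N$ has dimension $2\dim_\k V$, contradicting freeness of $\widetilde M$ over $R_{N+1}$. Thus no lift to $R_{N+1}$ exists, and $R^w(\A,V)\cong\k[\![t]\!]/(t^{N+1})$.
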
 

\section{Proof of Theorem \ref{thm2}}

In what follows we consider the $\k$-algebra $\A_0 = \k Q^{(viii)}/\langle a^2, b^2, dc, cd\rangle$, where $Q^{(viii)}$ is as in Table \ref{table1}. The result in Theorem \ref{thm2} is verified analogously for the other cases. 
The radical series of the projective indecomposable $\A_0$-modules are as in Figure \ref{projec}.

\begin{figure}[ht]
\begin{align*}
P_1=\begindc{\commdiag}[100]
\obj(-1,-1)[v1]{$S_1$}
\obj(0,0)[v2]{$S_1$}
\obj(1,-1)[v6]{$S_2$}
\obj(2,-2)[v7]{$S_2$}
\obj(3,-3)[v7]{$S_1$}
\obj(4,-4)[v7]{$S_1$}
\obj(5,-5)[v7]{$S_2$}
\obj(6,-6)[v7]{$\ddots$}
\enddc
&&
P_2=\begindc{\commdiag}[100]
\obj(-1,-1)[v1]{$S_2$}
\obj(0,0)[v2]{$S_2$}
\obj(1,-1)[v6]{$S_1$}
\obj(2,-2)[v7]{$S_1$}
\obj(3,-3)[v7]{$S_2$}
\obj(4,-4)[v7]{$S_2$}
\obj(5,-5)[v7]{$S_1$}
\obj(6,-6)[v7]{$\ddots$}
\enddc
\end{align*}
\caption{The radical series of the projective indecomposable $\A_0$-modules corresponding to the vertices of $Q^{(viii)}$.}\label{projec}
\end{figure} 

The following result is an immediate consequence of the description of morphisms between string modules for string algebras provided by H. Krause in \cite{krause}. 

\begin{lemma}\label{lemma3.2}
Let $V$ be a finite dimensional $\A_0$-module. Then  $\End_{\A_0}(V)=\k$ if and only if $V$ is a simple $\A_0$-module or $V$ is a string $\A_0$-module $V= M[\beta]$, where $\beta = \{c,d,ca,ad,db,bc,bca,adb\}$.  
\end{lemma}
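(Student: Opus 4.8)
The plan is to turn the statement into a combinatorial question about indecomposable modules and then feed it into Krause's morphism calculus. First I would note that $\End_{\A_0}(V)=\k$ forces $V$ to be indecomposable: a nontrivial decomposition $V=V_1\oplus V_2$ produces orthogonal idempotent projections in $\End_{\A_0}(V)$, so $\dim_\k\End_{\A_0}(V)\geq 2$. Since $\A_0=\k Q^{(viii)}/\langle a^2,b^2,dc,cd\rangle$ is gentle, hence special biserial, the Butler--Ringel classification of indecomposables over special biserial algebras applies: every finite-dimensional indecomposable $\A_0$-module is either a string module $M[w]$ or a band module. It therefore suffices to decide which string modules and which band modules are bricks.

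For string modules I would invoke Krause's description of morphism spaces in \cite{krause}: a $\k$-basis of $\Hom_{\A_0}(M[u],M[v])$ is indexed by the graph maps, that is, by a common substring $s$ that occurs at once as a quotient string of $M[u]$ (a top factor) and as a substring of $M[v]$ sitting in its socle filtration (a bottom factor). Taking $u=v=w$, the identity is the graph map with $s=w$, so $M[w]$ is a brick exactly when no proper substring of $w$ is simultaneously such a top factor and such a bottom factor. Here I would record the combinatorics of $\A_0$: the relations leave precisely four admissible length-two direct paths, $ca,\,bc,\,db,\,ad$, so direct compositions follow the $4$-cycle $a\to c\to b\to d\to a$, while peaks and valleys are controlled by the arrow pairs sharing a target, $\{a,d\}$ and $\{b,c\}$, and sharing a source, $\{a,c\}$ and $\{b,d\}$.

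The equivalence is then proved in two steps. For the ``if'' direction a finite check (each listed string has length at most three) shows that for $w\in\{c,d,ca,ad,db,bc,bca,adb\}$, and for the trivial strings giving $S_1$ and $S_2$, the top and socle share no simple in graph-map position, so the only graph endomorphism is the identity and $\End_{\A_0}(M[w])=\k$. For the ``only if'' direction I would show that every remaining string carries a nonscalar endomorphism: a direct string not in the list (every direct string of length $\geq 4$, together with $dbc$ and $cad$) has its two endpoints at the same simple with matching orientation, giving the nilpotent collapse $M[w]\twoheadrightarrow \mathrm{top}\cong\mathrm{soc}\hookrightarrow M[w]$; and a string with a genuine peak or valley has one simple appearing both in its top and in its socle in graph-map position, again producing a nonscalar graph endomorphism. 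The bookkeeping core is an induction on the length of $w$, peeling off repetitions of the cyclic pattern $acbd$ until one of the eight strings is reached.

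The step I expect to be the main obstacle is the band modules, because Krause's string-morphism formula does not govern them directly. Since the cyclic word $acbd$ does support cyclic strings, $\A_0$ genuinely has bands, so they cannot be dismissed on formal grounds; one must instead list the bands explicitly, compute $\End_{\A_0}$ of each band module $M(b,\lambda,m)$ through the corresponding description of homomorphisms between band modules, and verify that none of them contributes a brick outside the stated list. This is the delicate part of the argument, and it is where the combinatorics of the cyclic strings of $\A_0$ has to be confronted head-on rather than handled by the string-module dictionary.
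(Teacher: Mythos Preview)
Your plan coincides with the paper's approach: both rest on Krause's description of homomorphisms for string algebras in \cite{krause}. The paper's own proof is literally a one-sentence appeal to that reference, so what you have written is not an alternative route but a careful expansion of what the paper leaves implicit. Your reduction to indecomposables via the Butler--Ringel classification, the graph-map analysis of string endomorphisms, and the finite case check for the eight listed strings are exactly the computations hidden behind the paper's citation.

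Two remarks on the band-module step you flag as the main obstacle. First, the purely directed cycle you point to does not actually contribute band modules over $\A_0$: because $\A_0$ is the \emph{completion} of $\k Q^{(viii)}/I$, the element $e_1-\mu^{-1}(adbc)$ is a unit in $e_1\A_0 e_1$ for every $\mu\in\k^\ast$ (its inverse is the convergent series $\sum_{n\ge 0}\mu^{-n}(adbc)^n$), so $adbc$ must act nilpotently on every finite-dimensional $\A_0$-module and a directed band cannot occur. Only bands containing both direct and inverse letters need to be examined. Second, \cite{krause} also treats morphisms involving band modules, so the verification you propose stays inside the reference the paper already invokes; no separate tool is required. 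With these two observations the obstacle you anticipate shrinks considerably, though it is fair to say that the paper's one-line proof does leave this verification to the reader.
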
 

\begin{remark}\label{rem3.3}
Assume that $V$ is a finite dimensional $\A_0$-module such that $\End_{\A_0}(V)=\k$. If $\dim_\k \Ext_{\A_0}^1(V,V)=n$, then it follows by using the same arguments as those in \cite[First paragraph on pg. 223]{bleher15} that $R(\A_0,V)$ is a quotient of the ring of formal power series $\k[\![t_1,\ldots,t_n]\!]$. In particular, if $\Ext_{\A_0}^1(V,V) =0$, then $R(\A_0, V)=\k$.
\end{remark}

\begin{lemma}\label{lemma3.3}
Let $V$ be finite dimensional $\A_0$-module with $\End_{\A_0}(V)=\k$. 
\begin{enumerate}
\item If $V= M[\beta]$ with $\beta\in \{c,d,ca,ad,db,bc\}$, then $\dim_\k\Ext^1_{\A_0}(V,V)=0$ and  $R(\A_0, V) = \k$. 
\item If $V$ is a simple $\A_0$-module, then $R(\A_0,V)=\k[\![t]\!]/(t^2)$.
\item If $V = M[\beta]$ with $\beta \in \{bca,adb\}$, then $R(\A_0,V)=\k[\![t]\!]$. 
\end{enumerate}
\end{lemma}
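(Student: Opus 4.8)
The plan is to reduce all three parts to a single pair of inputs: the value of $\dim_\k\Ext^1_{\A_0}(V,V)$, and, when this dimension equals $1$, an application of Theorem \ref{thm8}. Since $\End_{\A_0}(V)=\k$ holds throughout (by hypothesis and Lemma \ref{lemma3.2}), the remark following Lemma \ref{lemma3.15} identifies $R(\A_0,V)$ with the weak universal deformation ring $R^w(\A_0,V)$, so Theorem \ref{thm8} applies directly. Because $\A_0$ is a string algebra, I would carry out each $\Ext^1$ computation combinatorially, using Krause's description in \cite{krause} (already invoked for Lemma \ref{lemma3.2}): a nonsplit self-extension of a direct string module $M[\beta]$ is built by concatenating two copies of $\beta$ through a connecting arrow, and whether this is possible is governed entirely by the four relations $a^2,b^2,cd,dc$. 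The trichotomy in the statement is then exactly the trichotomy between ``no admissible self-gluing'' ($\k$), ``a single self-gluing blocked at the next stage by $a^2=b^2=0$'' ($\k[\![t]\!]/(t^2)$), and ``unboundedly many self-gluings'' ($\k[\![t]\!]$).

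For part (i) I would show $\Ext^1_{\A_0}(M[\beta],M[\beta])=0$ for $\beta\in\{c,d,ca,ad,db,bc\}$ and conclude $R(\A_0,V)=\k$ by Remark \ref{rem3.3}. The point is that for each of these six strings the junction forced by self-gluing produces a forbidden subword: extending $M[c]$ (the string $1\xrightarrow{c}2$) would require an adjacency $cd$ or $dc$; extending $M[ca]$ (the string $1\xrightarrow{a}1\xrightarrow{c}2$) would force $dc$ after its terminal letter $c$; extending $M[bc]$ (the string $1\xrightarrow{c}2\xrightarrow{b}2$) would force $cd$ before its initial letter $c$; and the cases $d,ad,db$ are symmetric. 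Since $\A_0$ is special biserial, every indecomposable is a string or band module, so there are no further extensions to examine and the self-extension group vanishes in each case.

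For part (ii) let $V=S_1$, the case $V=S_2$ being symmetric under $1\leftrightarrow 2$, $a\leftrightarrow b$, $c\leftrightarrow d$. Here $\dim_\k\Ext^1_{\A_0}(S_1,S_1)=1$, the unique nonsplit self-extension being $M[a]=\left(\begin{smallmatrix}S_1\\S_1\end{smallmatrix}\right)$. I would take $\mathscr{L}_V=\{V_0,V_1\}$ with $V_0=S_1$ and $V_1=M[a]$, with $\epsilon_1\colon M[a]\twoheadrightarrow S_1$ the projection onto the top and $\iota_1\colon S_1\hookrightarrow M[a]$ the inclusion of the socle, and then check that $\sigma_1=\iota_1\circ\epsilon_1$ satisfies $\ker\sigma_1=S_1$, $\mathrm{im}\,\sigma_1\cong S_1$ and $\sigma_1^2=0$. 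This sequence is maximal because a further term would require the nonexistent module $\left(\begin{smallmatrix}S_1\\S_1\\S_1\end{smallmatrix}\right)=M[a^2]$, as $a^2=0$. Finally I would verify the terminal hypotheses $\dim_\k\Hom_{\A_0}(M[a],S_1)=1$ (maps factor through the top $S_1$) and $\Ext^1_{\A_0}(M[a],S_1)=0$ (any middle term has dimension vector $(3,0)$, hence decomposes, forcing a split sequence), so that Theorem \ref{thm8}(i) with $N=1$ yields $R(\A_0,S_1)\cong\k[\![t]\!]/(t^2)$.

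For part (iii) let $V=M[bca]$, the case $\beta=adb$ being symmetric. The string $bca$ begins with the loop $a$ and ends with the loop $b$, so it can be glued to a copy of itself through the arrow $d$ with legal junctions $db$ and $ad$; this gives the unique nonsplit self-extension and hence $\dim_\k\Ext^1_{\A_0}(M[bca],M[bca])=1$. Iterating the gluing along the infinite periodic walk obtained by repeating the cycle $1\xrightarrow{a}1\xrightarrow{c}2\xrightarrow{b}2\xrightarrow{d}1$ (whose existence reflects that $\A_0$ is infinite dimensional, cf.\ Figure \ref{projec}) produces an infinite ascending tower of uniserial string modules $V_0\subset V_1\subset V_2\subset\cdots$, where $V_\ell$ carries a filtration by $\ell+1$ copies of $V_0$ and $\dim_\k V_\ell=4(\ell+1)$. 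Taking $\epsilon_\ell$ to be projection onto the top copy of $V_0$ and $\iota_\ell$ the inclusion of the bottom copy, the operator $\sigma_\ell=\iota_\ell\circ\epsilon_\ell$ is the ``shift by one period'' and satisfies $\ker\sigma_\ell=V_0$, $\mathrm{im}\,\sigma_\ell^{\ell}\cong V_0$ and $\sigma_\ell^{\ell+1}=0$; since $\mathscr{L}_V$ is infinite, Theorem \ref{thm8}(ii) gives $R(\A_0,M[bca])\cong\k[\![t]\!]$. The main obstacle lies precisely in the bookkeeping feeding Theorem \ref{thm8}: pinning down $\dim_\k\Ext^1_{\A_0}(V,V)\in\{0,1\}$ in every case, and, for parts (ii) and (iii), establishing the maximality of $\mathscr{L}_V$ together with the exact kernel, image, and nilpotency conditions on the $\sigma_\ell$ (and the terminal conditions on $V_N$ in part (ii)); all of this rests on Krause's combinatorial calculus for string modules and on tracking how the relations $a^2,b^2,cd,dc$ obstruct or permit self-gluing.
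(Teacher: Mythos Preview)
Your proposal is correct and follows essentially the same approach as the paper: Remark \ref{rem3.3} for part (i), and Theorem \ref{thm8} applied to the sequences $\mathscr{L}_V=\{S_1,M[a]\}$ and $\mathscr{L}_V=\{M[bca],M[bca(dbca)^n],\ldots\}$ for parts (ii) and (iii), respectively. You supply considerably more combinatorial detail than the paper's terse proof (which simply asserts the $\Ext^1$ vanishing and names the sequences), but the strategy is identical; one small slip is that your $\epsilon_\ell$ should be the projection $V_\ell\twoheadrightarrow V_{\ell-1}$ (quotient by the bottom copy of $V_0$) rather than onto a single copy of $V_0$, though your description of $\sigma_\ell$ as the ``shift by one period'' shows you have the right map in mind.
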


\begin{proof}

Assume that $V$ is as in the hypothesis of (i). It is straightforward to verify that $\Ext^1_{\A_0}(V,V) = 0$. By using Remark \ref{rem3.3}, it follows that $R(\A_0, V) = \k$. Assume next that $V$ is as in (ii). Without loss of generality assume that $V= S_1$. It follows that $\mathscr{L}_V=\{V_0, V_1\}$, where $V_0 = S_1$, $V_1$ is the string module $M[a]$, and $\mathscr{L}_V$ is as in Theorem \ref{thm8} (i). It follows that $R(\A_0,V)= \k[\![t]\!]/(t^2)$. Finally, assume that $V$ is as in the hypothesis of (iii). Without loss of generality, assume that $V$ is the string module $M[bca]$. Then $\mathscr{L}_V = \{V_0, V_1, V_2,\ldots\}$, where $V_0 = M[bca]$, for all $n\geq 1$, $V_n = M[bca(dbca)^n]$, and  $\mathscr{L}_V$ is as in Theorem \ref{thm8} (ii). It follows that $R(\A_0, V) = \k[\![t]\!]$.
\end{proof}

By using the same reasoning in the proof of Lemma \ref{lemma3.3} for the remainder algebras in Table \ref{table1}, we obtain the complete proof of Theorem \ref{thm2}.

\begin{remark}
To clarify the final claim, we propose an algorithmic approach based on Theorem \ref{thm8} and the proof technique of Lemma \ref{lemma3.3} to systematically address the remaining cases in Table \ref{table1}. Consider a string module $V=M[\beta]$ associated with the string $\beta$ of a two point, infinite dimensional gentle $\Bbbk$-algebra $\Lambda$ from Table \ref{table1}, where $\End_{\A}(V)=\k$. If $\dim_\k\Ext^1_{\A}(V,V)=0$, then $R(\A, V) = \k$. Now, if $\dim_\k\Ext^1_{\A}(V,V)=1$ and a \textit{connecting arrow} $c$ exists, i.e., either an arrow or its inverse such that $\beta c\beta$ forms a string module in $\Lambda$, two situations arise: 
\begin{enumerate}
\item[i)] If $(\beta c)^n\beta$ is a string in $\Lambda$ for all $n\geq 1$, an infinite ordered sequence of indecomposable finite dimensional left $\A$-modules $\mathscr{L}_V= \{V_0,V_1,\ldots\}$, with $V_0 = V$ and $V_n=M[(\beta c)^n \beta]$, satisfying the conditions of Theorem \ref{thm8} ii), is obtained.
\item[ii)] Otherwise, for a minimal integer $N\geq1$ with $(\beta c)^N\beta\neq 0$, a finite ordered sequence of indecomposable finite dimensional left $\A$-modules $\mathscr{L}_V= \{V_0,V_1,\ldots, V_N\}$, with $V_0 = V$ and $V_N=M[(\beta c)^N \beta]$, satisfying  the conditions in Theorem \ref{thm8} i), is produced. Actually, in our cases $N$ is reduced to be equal to 1.
\end{enumerate}
As described in Lemma \ref{lemma3.2}, every indecomposable module $V$ (with $\End_{\A}(V)=\k$) is either simple or isomorphic to a string module $M[\beta]$, where $\beta$ admits a connecting arrow. Consequently, this characterization, combined with the aforementioned case-by-case analysis, yields a complete proof of Theorem \ref{thm2}.
\end{remark}

\section{Ethical Approval}

Not applicable

\section{Funding}  
The first author was partially supported by MINCIENCIAS, Convocatoria de Doctorado Nacional para profesores de IES 2021, Número 909. The second author was partially supported by CODI (Universidad de Antioquia, UdeA), by project numbers 2022-52654 and 2023-62291. The third author was supported by the research group PROMETE-KONRAD (Project No. 5INV1232) in Facultad de Matem\'aticas e Ingenier\'{\i}as at the Fundaci\'on Universitaria Konrad Lorenz, Bogot\'a, Colombia, by the Faculty Scholarship of the Office of Academic Affairs at the Valdosta State University, GA, USA,  and by the research group \'Algebra U. de. A in the Instituto de Matem\'aticas  at the Universidad de Antioquia in Medell\'{\i}n, Colombia. 

\section{Availability of data and materials}  

Not applicable.


\end{document}